\theoremstyle{plain}
\newtheorem{thm}{Theorem}[section]
\newtheorem{mth}[thm]{Metatheorem}
\newtheorem{prop}[thm]{Proposition}
\newtheorem{lemma}[thm]{Lemma}
\newtheorem{cor}[thm]{Corollary}
\newtheorem{defpr}[thm]{Definition/Proposition}
\newtheorem{thmdef}[thm]{Theorem/Definition}
\theoremstyle{definition}
\newtheorem{defn}[thm]{Definition}
\newtheorem*{defn*}{Definition}
\newtheorem*{question*}{Question}
\newtheorem{example}[thm]{Example}
\newtheorem*{example*}{Example}
\newtheorem{rem}[thm]{Remark}
\newtheorem*{rem*}{Remark}
\newcommand{\field}[1]{\mathbb{#1}}
\newcommand{\N}{\field{N}}
\newcommand{\Z}{\field{Z}}
\newcommand{\F}{\field{F}}
\newcommand{\ideal}[1]{\mathfrak{#1}}
\newcommand{\m}{\ideal{m}}
\newcommand{\n}{\ideal{n}}
\newcommand{\p}{\ideal{p}}
\newcommand{\q}{\ideal{q}}
\newcommand{\ia}{\ideal{a}}
\newcommand{\ib}{\ideal{b}}
\newcommand{\func}[1]{\mathrm{#1} \,}
\newcommand{\Spec}{\func{Spec}}
\newcommand{\hgt}{\func{ht}}
\newcommand{\ra}{\rightarrow}
\newcommand{\be}{\begin{enumerate}}
\newcommand{\ee}{\end{enumerate}}
\newcommand{\li}
 {\leftfootline}
\newcommand{\into}{\hookrightarrow}
\newcommand{\cF}{\mathcal{F}}
\newcommand{\cI}{\mathcal{I}}
\newcommand{\cJ}{\mathcal{J}}
\newcommand{\cK}{\mathcal{K}}
\newcommand{\cL}{\mathcal{L}}
\newcommand{\cO}{\mathcal{O}}
\newcommand{\cM}{\mathcal{M}}
\newcommand{\cb}{\mathcal{b}}
\renewcommand{\phi}{\varphi}
\newcommand{\cl}{{\mathrm{cl}}}
\let\int\relax
\DeclareMathOperator{\int}{i}
\newcommand{\sM}{\mathscr{M}}
\newcommand{\sR}{\mathscr{R}}
\DeclareMathOperator{\Img}{Im}
\DeclareMathOperator{\Idl}{Id}
\DeclareMathOperator{\Sub}{Sub}
\author{Neil Epstein}
\address{Department of Mathematical Sciences \\ George Mason University \\ Fairfax, VA  22030}
\email{nepstei2@gmu.edu}
\title{Tight closure, coherence, and localization at single elements}
\subjclass[2020]{Primary: 14F06, Secondary: 13A35, 14G17}
\date{February 29, 2024}
\begin{document}
\begin{abstract}
In this note, a condition (\emph{open persistence}) is presented under which a (pre)closure operation on submodules (resp. ideals) over rings of global sections over a scheme $X$ can be extended to a (pre)closure operation on sheaves of submodules of a coherent $\cO_X$-module (resp. sheaves of ideals in $\cO_X$).  A second condition (\emph{glueability}) is given for such an operation to behave nicely.  It is shown that for an operation that satisfies both conditions, the question of whether the operation commutes with localization at single elements is equivalent to the question of whether the new operation preserves quasi-coherence.  It is shown that both conditions hold for tight closure and some of its important variants, thus yielding a geometric reframing of the open question of whether tight closure localizes at single elements.  A new singularity type (\emph{semi F-regularity}) arises, which sits between F-regularity and weak F-regularity.  The paper ends with (1) a case where semi F-regularity and weak F-regularity coincide, and (2) a case where they cannot coincide without implying a solution to a major conjecture.
\end{abstract}
\maketitle

Tight closure theory 
has long been known to be connected with algebraic geometry -- e.g. in the connection between test ideals and multiplier ideals \cite{Sm-utest}, theorems about line bundles \cite{SmFuj+}, the connection between F-singularities and singularities arising from resolution of singularities \cite{SmFrat, Harat}, etc.. Hence, it was an early goal to establish that the tight closure operation, like the integral closure operation, would commute with localization at arbitrary multiplicatively closed sets \cite{HH-sFreg}.  Further pressure came from the theorem that if this were true, then tight closure would commute with arbitrary regular base change in all cases of geometric interest \cite{HHbase}. The question lay open for some 20 years until it was answered in the negative \cite{BM-unloc}.  Since then, the general attitude in the commutative algebra community has been that the tight closure operation itself is ultimately nongeometric, and that the \emph{true} geometry that comes from tight closure theory lies in artifacts that arise from its study (e.g. test ideals, the notion of strong F-regularity, etc.).

In this work, we take a different tack.  Namely, even though tight closure does not exhibit \emph{all} of the localization behavior one desires, it does behave well with respect to localization in some ways, and this can be exploited to create notions of tight closure for quasi-coherent sheaves over an open subset of a Noetherian scheme of prime characteristic.  Arising from these ideas, we obtain a singularity type \emph{between} weak F-regularity and F-regularity, and we recast the question of whether tight closure commutes with localization at a \emph{single} element of a ring in terms of a question about the coherence of certain sheaves of ideals or submodules.

Our approach is quite general.  Indeed, we isolate two characteristics of a closure operation, or even a preclosure operation, on ideals or submodules that allow for a sheaf theory based on that operation.  If a (pre)closure operation is \emph{openly persistent} (meaning the operation is compatible with restriction to affine open subsets of an affine scheme), then a natural extension from a (pre)closure operation ideals or submodules to sheaves of ideals or of submodules is constructed.  In fact, it is shown that the resulting operation on sheaves of ideals, or on sheaves of submodules of a given quasi-coherent module, is itself a (pre)closure operation on the corresponding poset.  Interestingly, though the ambient sheaf must be quasi-coherent, the subsheaf need not be quasi-coherent in order to construct the closure sheaf.  This is important because we do not know whether the closure of a coherent sheaf of ideals must be quasi-coherent.

If in addition a (pre)closure operation is \emph{glueable} (a technical condition), and the sheaf of ideals or submodules is quasi-coherent, then the new operation becomes easier to work with than otherwise, as one has attractive equivalent definitions for it.  Quasi-coherence becomes a key question, as it is shown that for a glueable preclosure operation on ideals or submodules, commutation with localization at single elements is \emph{equivalent} to quasi-coherence being maintained when the resulting operation is applied to a quasi-coherent subsheaf.  We then show that any preclosure operation that commutes with localization at single elements, as well as tight closure and its variants like $\ia^t$-tight closure, are openly persistent and glueable.  Hence all of the above results apply, and the single-element tight closure localization question is recast as a question about coherence of tight closure sheaves.

We obtain a new F-singularity type \emph{between} that of F-regularity and weak F-regularity, which we dub \emph{semi F-regularity}.  It is defined by saying that every sheaf of ideals is tightly closed.  We show that it is equivalent to weak F-regularity in Jacobson rings, but that they cannot be equivalent in local rings without implying that F-regularity is the same as weak F-regularity.

\section{Persistence and closure operations on subsheaves}

The purpose of this section is to give conditions under which a (pre)closure operation on ideals or submodules over rings arising from a scheme lifts to a (pre)closure operation on sheaves of ideals or of submodules over the scheme.  For unexplained terminology in algebraic geometry, see \cite{Hart-AG}.  For background on closure operations, see the survey \cite{nme-guide2} or the book \cite{El-clbook}.

\begin{defn}
Let $(S,\leq)$ be a partially ordered set.  A \emph{preclosure operation} 
on $S$ is a function $\cl: S \ra S$ (written $s \mapsto s^\cl$) that is:
\begin{itemize}
    \item \emph{extensive} (i.e., $s \leq s^\cl$ for all $s\in S$) and
    \item\emph{order-preserving} (i.e., whenever $s,t\in S$ with $s \leq t$, we have $s^\cl \leq t^\cl$).
\end{itemize}
A preclosure operation $\cl$ on $S$ is a \emph{closure operation} on $S$ if it is also \begin{itemize}
    \item \emph{idempotent} (i.e. $(s^\cl)^\cl = s^\cl$ for all $s\in S$).
\end{itemize}

Let $R$ be a ring and $M$ an $R$-module.  Set $(\Idl(R), \subseteq)$ to be the poset of ideals of $R$.  Then a \emph{(pre)closure operation on (the ideals of) $R$} is a (pre)closure operation on the poset $\Idl(R)$.  Let $(\Sub(M), \subseteq)$ be the poset of $R$-submodules of $M$.  Then a \emph{(pre)closure operation on (the submodules of) $M$} is a (pre)closure operation on $\Sub(M)$.

If $\sR$ is a class of rings and $\sM$ is a class of modules over various members of $\sR$, and we have a definition of a (pre)closure operation $\cl_M$ on each $M \in \sM$, we say $\cl$ is \emph{defined on (submodules in) $\sM$}.  If $R\in \sR$ and $M \in \sM$ is an $R$-module, then for a submodule $L$ of $M$, we write $L^\cl_M$ for $L^{\cl_M}$.
\end{defn}

If $L \subseteq M$ are $R$-modules and $R \ra S$ is a ring map, we write $LS$ for the image of the $S$-module homomorphism $L \otimes_R S \ra M \otimes_R S$ induced by the $R$-linear inclusion map $ L \hookrightarrow M$.

\begin{defn}
Let $R$ be a ring.  Let $\cl$ be a preclosure operation, such that for any open affine subset $U=\Spec S \subseteq \Spec R$, $\cl$ is defined on the ideals of $S$ (resp. the submodules of any (finite) $S$-module). Then we say $\cl$ is \emph{openly persistent} on ideals (resp. (finite) modules) over $\Spec R$ if for any ideal $I$ of $R$ and $x \in I^\cl$ (resp. any $R$-submodule inclusion $L \subseteq M$ (with $M$ finite) and $x\in L^\cl_M$), and any affine open subset $U \subseteq \Spec R$ with corresponding ring map $\phi: R \ra S$, we have $\phi(x) \in (IS)^\cl$ (resp. $x \otimes 1 \in (LS)^\cl_{M \otimes_R S}$).

More generally, for a scheme $X$, we say $\cl$ is \emph{openly persistent on ideals (resp. (finite) modules) over $X$} if for any open affine $U \subseteq X$, $\cl$ is openly persistent on ideals (resp. (finite) modules) over $U$.
\end{defn}

\begin{rem}\label{rem:flatly}
    Suppose $\cl$ is a closure operation that is persistent over flat ring maps.  Then it is openly persistent.  This is because the ring homomorphisms $R \ra S$ arising from open immersions are always flat \cite[Proposition III.9.2(a)]{Hart-AG}.
\end{rem}

\begin{rem} If $\cl$ is openly persistent over $\Spec R$, then for any $f\in R$, we always have $I^\cl R_f \subseteq (IR_f)^\cl$ (resp. $(L^\cl_M)_f \subseteq (L_f)^\cl_{M_f}$).  This is because the localization map $R \ra R_f$ corresponds to the open immersion $D(f) \hookrightarrow \Spec R$.
\end{rem}

\begin{example}\label{ex:Houston}
Open persistence does not always imply \emph{commuting} with localization, even at multiplicative sets generated by a single element.  To see this, first recall the $v$- and $t$-operations.  Namely, if $R$ is an integral domain with fraction field $F$, then for any fractional ideal $\ia$ of $R$, one defines $\ia^{-1} := \{x \in F \mid x\ia \subseteq R$.  Then we set $(0)^v = (0)$, and for a nonzero ideal $I$, we set $I^v := (I^{-1})^{-1}$. If $I=I^v$, we say $I$ is \emph{divisorial} (e.g., any principal ideal). We let $I^t :=\bigcup\{J^v \mid J \subseteq I$, $J$ is a finitely generated ideal$\}.$  Then both $v$ and $t$ are closure operations, with $t\leq v$ (see \cite[p. 15]{El-clbook}).  Clearly if $R$ is Noetherian, then $t=v$.  Note also that whenever $I$ is an ideal of a ring $R$ and $R \ra S$ is flat, then $I^tS \subseteq (IS)^t$ \cite[Proposition 1.2.3 (1) $\implies$ (5)]{El-clbook}.  Therefore, $t$ is openly persistent by Remark~\ref{rem:flatly}.

Now let $T = \mathbb Q[x,y]_{(x,y)}$, let $\m$ be the maximal ideal of $T$, let $p$ be a positive prime number, and consider the subring $R = \mathbb Z_{p\Z} + \m$ of $T$.  Note that $R_p = R[1/p] = T$.  Also, since $\m$ is a maximal ideal of $T$, $\m T = \m \subseteq R$, and $T \neq R$, it follows that $\m = (R :T)$.  Thus by \cite[Proposition 6]{Ba-stdiv}, $\m$ is divisorial as an ideal of $R$.  Hence it is $t$-closed.
On the other hand, by \cite[Exercise 12.4]{Mats}, since $T$ is a Krull domain, any proper nonzero divisorial ideal of $T$ must have height 1, and $\hgt \m = 2$, so $\m$ is not divisorial in $T$ (hence also not $t$-closed since $T$ is Noetherian).  Thus, $(\m R)^t R_p \subsetneq (\m R_p)^t$.

This example is due to Evan Houston.
\end{example}

\begin{lemma}\label{lem:openpers}
Let $\cl$ be an openly persistent preclosure operation on modules over a scheme $X$.  
Let $\cM$ be a quasi-coherent $\cO_X$-module, and $\cL$ a sheaf of $\cO_X$-submodules of $\cM$.  Let $U$, $V$ be affine open subsets of $X$ with $V \subseteq U$.  Then for any $s\in \cL(U)^\cl_{\cM(U)}$, we have $s\vert_V \in \cL(V)^\cl_{\cM(V)}$.
\end{lemma}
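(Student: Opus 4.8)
The plan is to reduce everything to the affine picture and then apply open persistence directly. Write $R = \cO_X(U)$ and $S = \cO_X(V)$; since $U$ is open in $X$ and $V \subseteq U$ is open and affine, $V$ is an affine open subset of the affine scheme $U = \Spec R$, and the inclusion $V \into U$ induces on global sections a ring map $\phi : R \ra S$. Put $M = \cM(U)$ and $L = \cL(U)$, so that $L$ is an $R$-submodule of $M$, $s \in L^\cl_M$, and our goal is to control $s|_V \in \cM(V)$.

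First I would pin down the sections over $V$. Because $\cM$ is quasi-coherent, $\cM|_U \cong \widetilde{M}$, so $\cM(V) \cong M \otimes_R S$ and the restriction map $\cM(U) \ra \cM(V)$ is the base-change map $m \mapsto m \otimes 1$; in particular $s|_V = s \otimes 1$. (When $V = D(f)$ this is just localization; the general affine open case follows by covering $V$ with basic opens of $\Spec R$, or from the formula for the pullback of a quasi-coherent sheaf along $\Spec S \ra \Spec R$.) Now $\cL$ is only assumed to be a sheaf of $\cO_X$-submodules of $\cM$, not quasi-coherent, so one cannot claim $\cL(V) = L \otimes_R S$. But $\cL$ being a subsheaf does force the restriction map to carry $\cL(U)$ into $\cL(V)$, i.e.\ $\ell \otimes 1 \in \cL(V)$ for every $\ell \in L$; and since $\cL(V)$ is an $S$-submodule of $\cM(V)$, it contains the $S$-span of these elements, which is exactly $LS$. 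Thus $LS \subseteq \cL(V)$, with the containment pointing in the direction we want.

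Then I would invoke open persistence. Since $\cl$ is openly persistent on modules over $X$, it is openly persistent on modules over the affine open $U = \Spec R$; applying this to the submodule inclusion $L \subseteq M$, the element $s \in L^\cl_M$, and the affine open $V = \Spec S \subseteq \Spec R$ with ring map $\phi$, we get $s \otimes 1 \in (LS)^\cl_{M \otimes_R S}$. Under the identifications of the previous paragraph this says $s|_V \in (LS)^\cl_{\cM(V)}$. Finally, since $LS \subseteq \cL(V)$ and $\cl$ is order-preserving, $(LS)^\cl_{\cM(V)} \subseteq \cL(V)^\cl_{\cM(V)}$, and therefore $s|_V \in \cL(V)^\cl_{\cM(V)}$. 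There is no deep obstacle here: the only thing requiring care is the bookkeeping in the middle step — checking that $\cM(V)$ really is $M \otimes_R S$ with the naive restriction map, and that the subsheaf hypothesis yields only the containment $LS \subseteq \cL(V)$ rather than equality. It is precisely the possible failure of quasi-coherence of $\cL$ that forces the argument to route through $LS$ and to use monotonicity of $\cl$ at the very end.
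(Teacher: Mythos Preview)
Your proof is correct and follows essentially the same route as the paper's: identify $\cM(V)$ with $M\otimes_R S$ via quasi-coherence of $\cM$, observe that the subsheaf hypothesis on $\cL$ yields only the containment $LS \subseteq \cL(V)$, apply open persistence to get $s\otimes 1 \in (LS)^\cl_{M\otimes_R S}$, and finish with order-preservation. The paper packages the identification $LS \subseteq \cL(V) \subseteq \cM(V) \cong M\otimes_R S$ into a commutative square (using flatness of $R\to S$ to make $L\otimes_R S \hookrightarrow M\otimes_R S$ injective), but the substance is identical to your prose argument.
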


\begin{proof}
It is harmless to assume $X=U$.  Then let $A \ra B$ be the ring map representing the open immersion $V \subseteq X$.  Let $M = \cM(X)$ (so that $\tilde{M} = \cM$) and $L = \cL(X)$, so that $s \in L^\cl_M$.  By open persistence of $\cl$, we have $s \otimes 1 \in (LB)^\cl_{M \otimes_A B}$.  Now, consider the following commutative diagram of $B$-modules. \[
\xymatrix{
L \otimes_A B \ar^\psi[r]  \ar@{^{(}->}[d]^i & \cL(V) \ar@{^{(}->}[d]^j\\
M \otimes_A B \ar^{\cong}_\phi[r]& \cM(V)
}
\]
Note that $\phi$ is an isomorphism because $\cM$ is quasi-coherent \cite[Chapter 5, Exercise 1.4]{Liu-AGbook}, and $i$ is injective since $B$ is flat over $A$. The fact that $j$ is injective is part of what ``sheaf of submodules'' means.  It follows that $\psi$ is injective, and that the image of $\psi$ can be identified with $LB = (L \otimes_A B \ra M \otimes_A B)= \Img i$.  That is, we have $LB \subseteq \cL(V) \subseteq \cM(V) = M \otimes_A B$, so $s\vert_V = s \otimes 1 \in (LB)^*_{M \otimes_A B} \subseteq \cL(V)^*_{\cM(V)}$ under these identifications.
\end{proof}

\begin{thmdef}\label{thmdef:tcmod}
Let $\cl$ be an openly persistent preclosure operation on modules over a scheme $X$.  Then we can extend it to a preclosure operation on subsheaves of quasi-coherent sheaves of modules.

In particular, let $\cM$ be a quasi-coherent $\cO_X$-module, and let $\cL\subseteq \cM$ be a sheaf of submodules.  Define a sheaf $\cL^\cl_\cM$ of submodules of $\cM$ as follows: For $U \subseteq X$ open and $s \in \cM(U)$, we say $s \in \cL^\cl_\cM(U)$ if for any $x\in U$, there is an open affine set $V$ with $x \in V \subseteq U$ such that $s\vert_V \in \cL(V)^\cl_{\cM(V)}$ as $\cO_X(V)$-modules.

Given $s\in \cM(U)$, we have $s\in \cL^\cl_\cM(U)$ if and only if there is some collection $\{U_\alpha\}_{\alpha \in A}$ of open affine subsets of $U$ with $U = \bigcup_{\alpha \in A} U_\alpha$ such that $s\vert_{U_\alpha} \in \cL(U_\alpha)^\cl_{\cM(U_\alpha)}$ for all $\alpha \in A$.

The above definition makes $\cL^\cl_\cM$ a subsheaf of $\cO_X$-modules of $\cM$.  Moreover, this is a preclosure operation on subsheaves of $\cM$, in that \begin{itemize}
    \item For any subsheaf $\cL$ of $\cM$, $\cL \subseteq \cL^\cl_\cM$, and
    \item Given subsheaf inclusions $\cK\subseteq \cL \subseteq \cM$, then $\cK^\cl_\cM$ is a subsheaf of $\cL^\cl_\cM$,
\end{itemize}
If, moreover, $X$ is Noetherian, $\cM$ is coherent, and $\cl$ is a closure operation (i.e. idempotent), then we obtain a closure operation on subsheaves on $\cM$, in that we also have idempotence.  That is:
\begin{itemize}
    \item For any subsheaf $\cL$ of $\cM$, we have $\left(\cL^\cl_\cM\right)^\cl_\cM = \cL^\cl_\cM$.
\end{itemize}
\end{thmdef}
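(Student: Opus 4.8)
The plan is to dispatch the structural claims quickly and reserve the real work for idempotence. I would begin with the equivalence of the two descriptions of $\cL^\cl_\cM(U)$. Given a family $\{U_\alpha\}$ of affine opens covering $U$ with $s\vert_{U_\alpha}\in\cL(U_\alpha)^\cl_{\cM(U_\alpha)}$, each $x\in U$ lies in some $U_\alpha$, and refining to a distinguished affine open $V=D(f)\subseteq U_\alpha$ containing $x$, Lemma~\ref{lem:openpers} gives $s\vert_V=(s\vert_{U_\alpha})\vert_V\in\cL(V)^\cl_{\cM(V)}$; conversely, the affine opens produced by the first description form such a covering family. With the two descriptions interchangeable, every remaining structural claim reduces to Lemma~\ref{lem:openpers} plus the fact that distinguished affine opens form a basis. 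To see $\cL^\cl_\cM(U)$ is an $\cO_X(U)$-submodule of $\cM(U)$, I would, for $s,s'\in\cL^\cl_\cM(U)$ and $x\in U$, pass to one distinguished affine open $W$ around $x$ contained in the witnessing opens for both $s$ and $s'$; by Lemma~\ref{lem:openpers} both $s\vert_W,s'\vert_W$ lie in the submodule $\cL(W)^\cl_{\cM(W)}$, hence so does every $\cO_X(W)$-combination, giving closure under the module operations. The same intersection trick shows $\cL^\cl_\cM$ is stable under restriction, so it is a subpresheaf of $\cM$; the separation axiom is then automatic, and the gluing axiom is immediate from the first description. Extensivity $\cL\subseteq\cL^\cl_\cM$ and order-preservation $\cK\subseteq\cL\Rightarrow\cK^\cl_\cM\subseteq\cL^\cl_\cM$ follow locally from extensivity and order-preservation of $\cl$ on each $\Sub(\cM(V))$.

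For idempotence I would use the standing hypotheses: $X$ Noetherian, $\cM$ coherent, $\cl$ idempotent. Writing $\cL':=\cL^\cl_\cM$, the inclusion $\cL'\subseteq(\cL')^\cl_\cM$ is extensivity, so I must prove the reverse. By the first description it suffices to show: if $V=\Spec B$ is an affine open of $X$ and $t\in\cM(V)$ satisfies $t\in\bigl(\cL^\cl_\cM(V)\bigr)^\cl_{\cM(V)}$, then $t\in\cL^\cl_\cM(V)$. Set $N:=\cM(V)$, a finite $B$-module by coherence, and $P:=\cL^\cl_\cM(V)\subseteq N$. Since $B$ is Noetherian, $P$ is finitely generated, say $P=Bp_1+\dots+Bp_m$; and $t\in P^\cl_N$. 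For each generator $p_k$, the second description for $\cL^\cl_\cM(V)$ (refined to distinguished opens via Lemma~\ref{lem:openpers}) together with quasi-compactness of $V$ supplies a \emph{finite} affine cover $V=\bigcup_i D(f_{k,i})$ with $p_k\vert_{D(f_{k,i})}\in\cL(D(f_{k,i}))^\cl_{\cM(D(f_{k,i}))}$. I would then pass to the common refinement whose pieces are the $D(g)$ with $g=f_{1,i_1}\cdots f_{m,i_m}$, one for each tuple of indices: these finitely many distinguished opens still cover $V$, and by Lemma~\ref{lem:openpers} every $p_k$ restricts into $\cL(D(g))^\cl_{\cM(D(g))}$; since this set is an $\cO_X(D(g))$-submodule of $\cM(D(g))$ and the $p_k$ generate $P$, I obtain $PB_g\subseteq\cL(D(g))^\cl_{\cM(D(g))}$. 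Finally, applying open persistence of $\cl$ over the affine $V$ to $t\in P^\cl_N$ and the localization $B\to B_g$ gives $t\vert_{D(g)}\in(PB_g)^\cl_{\cM(D(g))}$, and order-preservation of $\cl$ followed by its idempotence upgrades this to $t\vert_{D(g)}\in\cL(D(g))^\cl_{\cM(D(g))}$. As the $D(g)$ cover $V$, the second description yields $t\in\cL^\cl_\cM(V)$, completing the argument.

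The step I expect to be the crux is the amalgamation in the idempotence argument. One cannot simply invoke idempotence of $\cl$ on $\cM(V)$ directly, because $\cL^\cl_\cM(V)$ need not equal $\cL(V)^\cl_{\cM(V)}$ for affine $V$ — that equality is essentially the glueability condition discussed later — so the local covers witnessing $\cl$-closure of the various generators must genuinely be merged. Noetherianity and coherence are used for exactly this: to make $P=\cL^\cl_\cM(V)$ finitely generated, so that only finitely many such covers need be combined into a single finite affine cover of $V$, with open persistence then transporting $t\in P^\cl_N$ down to that cover where honest idempotence of $\cl$ can be applied. I would be careful to check that the refinement by products $f_{1,i_1}\cdots f_{m,i_m}$ really covers $V$ (each point lies in some $D(f_{k,i_k})$ for every $k$) and that the identifications $N\otimes_B B_g=\cM(D(g))$, and hence $PB_g\subseteq\cM(D(g))$, are the standard ones afforded by quasi-coherence of $\cM$.
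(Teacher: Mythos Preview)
Your proof is correct and follows essentially the same approach as the paper's: the structural claims are handled identically via Lemma~\ref{lem:openpers} and the affine basis, and the idempotence argument hinges on the same mechanism---finite generation of $\cL^\cl_\cM(V)$ from Noetherianity and coherence of $\cM$, followed by open persistence, order-preservation, and idempotence of $\cl$. The only organizational difference is that the paper argues pointwise (for a fixed $x\in U$ it intersects the finitely many witnessing neighborhoods of the generators to a single affine $W\ni x$), whereas you build a global common refinement of $V$ by products $D(f_{1,i_1}\cdots f_{m,i_m})$; the pointwise version is marginally shorter, but the content is the same.
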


\begin{proof}
First we prove that for any open $U \subseteq X$,  $\cL^\cl_\cM(U)$ is an $\cO_X(U)$-module. To see this, let $s,t \in \cL^\cl_\cM(U)$ and $a \in \cO_X(U)$.  Let $x\in U$ and let $V,V'$ affine open sets with $x \in V \subseteq U$, $x \in V' \subseteq U$ such that $s\vert_V \in \cL(V)^\cl_{\cM(V)}$ and $t\vert_{V'} \in \cL(V')^\cl_{\cM(V')}$.  Let $W$ be an affine open subset of $V \cap V'$ with $x\in W$. (Such $W$ exists because $X$ has a basis of affine open subsets.)  Then by Lemma~\ref{lem:openpers}, we have, $s\vert_W, t\vert_W \in \cL(W)^\cl_{\cM(W)}$.  Then since $\cL(W)^\cl_{\cM(W)}$ is an $\cO_X(W)$-module, we have  $(as+t)\vert_W = (a\vert_W \cdot s\vert_W) + t\vert_W \in \cL(W)^\cl_{\cM(W)}$.  Thus by definition, $as+t \in \cL^\cl_\cM(U)$, so that $\cL^\cl_\cM(U)$ is an $\cO_X(U)$-module.

Next we prove the restriction axiom for $\cL^\cl_\cM$.  Let $V \subseteq U$ be open subsets of $X$ and let $s\in \cL^\cl_\cM(U)$.  Let $x\in V$.  Then there is some affine open $W$ with $x\in W \subseteq U$ such that $s\vert_W \in \cL(W)^\cl_{\cM(W)}$.  Let $C$ be an affine open subset of $V \cap W$ with $x\in C$.  Then by Lemma~\ref{lem:openpers}, $(s\vert_V)\vert_C = s\vert_C = (s\vert_W)\vert_C \in \cL(C)^\cl_{\cM(C)}$.  Thus, $s\vert_V \in \cL^\cl_\cM(V)$.

The locality property for $\cL^\cl_\cM$ follows from that of $\cM$.


To see the equivalence in the two definitions, let $U$ be an open set and $s\in \cM(U)$.  First suppose $s\in \cL^\cl_\cM(U)$.  Then for each $x\in U$, there is some open affine $V_x$ with $x\in V_x \subseteq U$ and $s\vert_{V_x} \in \cL(V_x)^\cl_{\cM(V_x)}$.  Thus, the collection $\{V_x \mid x\in U\}$ serves as the required open cover.  Conversely, suppose there is an affine open cover $\{U_\alpha\}_{\alpha \in A}$ of $U$ such that $s\vert_{U_\alpha} \in \cL(U_\alpha)^\cl_{\cM(U_\alpha)}$ for all $\alpha \in A$.  Then for any $x\in U$, there is some $\alpha$ with $x \in U_\alpha \subseteq U$, whence $s \in \cL^\cl_\cM(U)$.

Next we prove the gluing axiom for $\cL^\cl_\cM$.  Let $U$ be an open subset of $X$.  Let  $\{U_\alpha\}_{\alpha \in A}$ be an open over of $U$. Let $s\in \cM(U)$ such that for each $\alpha \in A$, we have $s\vert_{U_\alpha} \in \cL^\cl_\cM(U_\alpha)$.  We need to show that $s\in \cL^\cl_\cM(U)$.  To see this, let $x\in U$.  Then there is some $\alpha$ with $x\in U_\alpha$.  Since $s\vert_{U_\alpha} \in \cL^\cl_\cM(U_\alpha)$, there is some affine open $W$ with $x\in W \subseteq U_\alpha$ such that $s\vert_W = (s\vert_{U_\alpha})\vert_W \in \cL(W)^\cl_{\cM(W)}$.  Since $W \subseteq U$ and $x\in U$ was arbitrary, it follows that $s \in \cL^\cl_\cM(U)$.

Finally we prove that $(-)^\cl_\cM$ is a (pre)closure operation on sheaves of  $\cO_X$-submodules of $\cM$:

Let $\cL$ be a sheaf of $\cO_X$-submodules of $\cM$, $U$ an open subset of $X$, and $s \in \cL(U)$.  Let $V$ be any open affine subset of $U$ with $s \in V \subseteq U$.  Then $s\vert_V \in \cL(V) \subseteq \cL(V)^\cl_{\cM(V)}$, since $\cl$ is a preclosure operation on submodules of the  $\cO_X(V)$-module $\cM(V)$.  Hence, $s \in \cL^\cl_\cM(U)$.

Now let $\cK \subseteq \cL$ be sheaves of submodules of $\cM$. Let $U$ be an open subset of $X$ and $s \in \cK^\cl_\cM(U)$.  Let $x\in U$.  Then there is some open affine $V$ with $x\in V \subseteq U$ and $s\vert_V \in \cK(V)^\cl_{\cM(V)}$.  But since $\cl$ is a preclosure operation on submodules of the  $\cO_X(V)$-module $\cM(V)$, and $\cK(V)$ is an $\cO_X(V)$-submodule of $\cL(V)$, it follows that $s\vert_V \in \cL(V)^\cl_{\cM(V)}$.  Thus, $s \in \cL^\cl_\cM(U)$.  Since $U$ was arbitrary, it follows that $\cK^\cl_\cM$ is a subsheaf of $\cL^\cl_\cM$.

Finally, let $\cL$ be a sheaf of $\cO_X$-submodules of $\cM$, where $X$ is Noetherian, $\cM$ is coherent, and $\cl$ is idempotent on submodules.  Let $U$ be an open subset of $X$, and let $s \in \left(\cL^\cl_\cM\right)^\cl_\cM(U)$.  Let $x\in U$.  Then there is an open affine $V$ with $x\in V \subseteq U$ and $s\vert_V \in \cL^\cl_\cM (V)^\cl_{\cM(V)}$.  Since $\cO_X(V)$ is Noetherian and $\cM(V)$ is finitely generated as an $\cO_X(V)$-module, it follows that the submodule $\cL^\cl_\cM (V)$ is also finitely generated over $\cO_X(V)$.  Say $t_1, \ldots, t_n \in \cL^\cl_\cM(V)$ comprise a generating set.  Then for each $1\leq i \leq n$, there is an open affine $V_i$ with $x\in V_i \subseteq V$ such that ${t_i}\vert_{V_i} \in \cL(V_i)^\cl_{\cM(V_i)}$. Now let $W$ be an open affine with $x \in W \subseteq \bigcap_{i=1}^n V_i$.  Then by Lemma~\ref{lem:openpers}, each $t_i\vert_W \in \cL(W)^\cl_{\cM(W)}$.  Thus, $\cL^\cl_\cM(V)\vert_W \subseteq \cL(W)^\cl_{\cM(W)}$.  Then by open persistence of $\cl$, we have $s\vert_W \in \left(\cL^\cl_\cM(V)\vert_W\right)^\cl_{\cM(W)} \subseteq \left(\cL(W)^\cl_{\cM(W)}\right)^\cl_{\cM(W)} = \cL(W)^\cl_{\cM(W)}$, where both the containment and the last equality follow from the fact that $\cl$ is a closure operation on $\cO_X(W)$-submodules of $\cM(W)$.  Since $x\in U$ was arbitrary and $W$ is an open affine with $x \in W \subseteq U$, it follows that $s \in \cL^\cl_\cM(U)$.  Hence, $\left(\cL^\cl_\cM\right)^\cl_\cM$ is a subsheaf of $\cL^\cl_\cM$.  But we already know the subsheaf relation holds in the other direction.  Hence, $\left(\cL^\cl_\cM\right)^\cl_\cM = \cL^\cl_\cM$.
\end{proof}

\begin{rem}
Suppose one only had a (pre)closure operation on ideals, and not submodules, then the above holds for sheaves of ideals -- with the same proof, restricting our attention to $\cM = \cO_X$.  That is, we have the following:
\end{rem}

\begin{thmdef}
Let $\cl$ be an openly persistent preclosure operation on ideals over a scheme $X$.  Then we can extend it to a preclosure operation on sheaves of ideals.

In particular, let $\cI \subseteq \cO_X$ be a sheaf of ideals.  Define a sheaf $\cI^\cl$ of ideals as follows: For $U \subseteq X$ open and $s \in \cO_X(U)$, we say $s \in \cI^\cl(U)$ if for any $x\in U$, there is an open affine set $V$ with $x \in V \subseteq U$ such that $s\vert_V \in \cI(V)^\cl$ in $\cO_X(V)$.

Given $s\in \cO_X(U)$, we have $s\in \cI^\cl(U)$ if and only if there is some collection $\{U_\alpha\}_{\alpha \in A}$ of open affine subsets of $U$ with $U = \bigcup_{\alpha \in A} U_\alpha$ such that $s\vert_{U_\alpha} \in \cI(U_\alpha)^\cl$ for all $\alpha \in A$.

The above definition makes $\cI^\cl$ a sheaf of ideals of $\cO_X$.  Moreover, this is a preclosure operation on sheaves of ideals, in that \begin{itemize}
    \item For any ideal sheaf $\cI$ of $\cO_X$, $\cI \subseteq \cI^\cl$, and
    \item Given an ideal sheaf inclusion $\cJ\subseteq \cI$, $\cJ^\cl$ is a subsheaf of $\cI^\cl$.
\end{itemize}
If, moreover, $X$ is Noetherian and $\cl$ is a closure operation (i.e. idempotent), then we obtain a closure operation on ideal sheaves, in that we have idempotence.  That is:
\begin{itemize}
    \item For any ideal sheaf $\cI$, we have $(\cI^\cl)^\cl = \cI^\cl$.
\end{itemize}
\end{thmdef}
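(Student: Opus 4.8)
The plan is to deduce this statement from Theorem/Definition~\ref{thmdef:tcmod} by specializing the ambient module to $\cM = \cO_X$. Indeed, a sheaf of ideals $\cI \subseteq \cO_X$ is, by definition, a sheaf of $\cO_X$-submodules of the quasi-coherent $\cO_X$-module $\cO_X$, so the sheaf $\cI^\cl$ defined here is literally $\cI^\cl_{\cO_X}$ in the sense of Theorem/Definition~\ref{thmdef:tcmod}. The one point that requires a moment's care is the matching of hypotheses: Theorem/Definition~\ref{thmdef:tcmod} is stated for an operation that is openly persistent on \emph{modules}, whereas here we only assume open persistence on \emph{ideals}. However, on inspecting the proof of that theorem (and of Lemma~\ref{lem:openpers} on which it rests), when $\cM = \cO_X$ every invocation of open persistence is applied to a submodule $L$ of an $\cO_X(U)$-module of the form $\cO_X(V)$ — that is, to an ideal $L \subseteq \cO_X(V)$ pulled back along the (flat) ring map $\cO_X(U) \to \cO_X(V)$ of an affine inclusion $V \subseteq U$. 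So the arguments go through using only open persistence on ideals, which is exactly our hypothesis.

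Granting this, the first assertions are immediate: by Theorem/Definition~\ref{thmdef:tcmod}, $\cI^\cl = \cI^\cl_{\cO_X}$ is a subsheaf of $\cO_X$-modules of $\cO_X$ — i.e.\ a sheaf of ideals — the two displayed descriptions of its sections agree, and $(-)^\cl$ is a preclosure operation on sheaves of ideals. Here extensiveness $\cI \subseteq \cI^\cl$ and order-preservation $\cJ \subseteq \cI \Rightarrow \cJ^\cl \subseteq \cI^\cl$ are the corresponding clauses of that theorem, specialized to $\cM = \cO_X$ (note $\cJ \subseteq \cI$ as ideal sheaves is the same as $\cJ \subseteq \cI$ as submodule sheaves of $\cO_X$).

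For idempotence I would check that the finiteness inputs of the idempotence half of Theorem/Definition~\ref{thmdef:tcmod} are available in this setting. Since $X$ is Noetherian it is in particular locally Noetherian, so $\cO_X$ is a coherent $\cO_X$-module; and for every affine open $V \subseteq X$ the ring $\cO_X(V)$ is Noetherian (as $V = \Spec \cO_X(V)$ is an open subscheme of a Noetherian scheme), so the ideal $\cI^\cl(V) \subseteq \cO_X(V)$ is finitely generated. These are precisely the hypotheses — $\cM$ coherent, $\cM(V)$ finite over the Noetherian ring $\cO_X(V)$, hence $\cL^\cl_\cM(V)$ finite — that are used in the proof of Theorem/Definition~\ref{thmdef:tcmod} to run the argument that $(\cI^\cl)^\cl \subseteq \cI^\cl$; combined with the reverse containment from extensiveness, this gives $(\cI^\cl)^\cl = \cI^\cl$. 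There is no essential obstacle here beyond this bookkeeping; alternatively, as the preceding remark asserts, one may simply transcribe the proof of Theorem/Definition~\ref{thmdef:tcmod} verbatim with every occurrence of ``$\cM(V)$'' replaced by ``$\cO_X(V)$''.
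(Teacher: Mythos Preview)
Your proposal is correct and matches the paper's approach exactly: the paper does not give a separate proof, but rather states in the preceding remark that the module version (Theorem/Definition~\ref{thmdef:tcmod}) ``holds for sheaves of ideals -- with the same proof, restricting our attention to $\cM = \cO_X$.'' Your additional care in noting that only open persistence on \emph{ideals} is invoked when $\cM = \cO_X$, and that the Noetherian hypothesis supplies the finiteness needed for idempotence, makes explicit precisely what the paper leaves implicit.
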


\section{Glueable (pre)closure operations}\label{sec:tcsheaves}
In this section, we provide a second condition (\emph{glueability}) under which (pre)closures of sheaves are particularly well behaved.

\begin{defn}
Let $R$ be a ring and $\cl$ be an openly persistent preclosure operation on submodules (resp. ideals) over $\Spec R$.  Let $I$ be an ideal, and let $(f_\alpha)_{\alpha \in A}$ be a generating set for $I$.  Let $g\in \sqrt{I}$.  Let $L \subseteq M$ be $R$-modules (resp. let $J$ be an ideal) and $z \in M$ (resp. $z\in R$) such that for all $\alpha \in A$, we have $z/1 \in (L_{f_\alpha})^\cl_{M_{f_\alpha}}$ (resp. $z/1 \in (J_{f_\alpha})^\cl$ in $R_{f_\alpha}$).  We say $\cl$ is \emph{glueable over $R$} if in any such circumstance, it follows that $z/1 \in (L_g)^\cl_{M_g}$ (resp. $z/1 \in (J_g)^\cl$ in $R_g$).

If $\cl$ is an openly persistent preclosure operation on submodules (resp. ideals) over a scheme $X$, we say it is \emph{glueable over $X$} if for all affine open $U=\Spec R \subseteq X$, $\cl$ is glueable over $R$.
\end{defn}

\begin{prop}
Let $R$ be a ring. Let $\cl$ be an openly persistent preclosure operation on submodules (or ideals) over $\Spec R$ that commutes with localizations at single elements.  Then $\cl$ is glueable over $R$.
\end{prop}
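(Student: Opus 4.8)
The plan is to use the commutation hypothesis to collapse the entire family $\{(L_{f_\alpha})^\cl_{M_{f_\alpha}}\}_{\alpha \in A}$ into localizations of a \emph{single} submodule of $M$, after which glueability becomes an elementary statement about radical ideals and submodules.

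First I would set $N := L^\cl_M \subseteq M$ (resp.\ $N := J^\cl \subseteq R$ in the ideal case, obtained by taking $M = R$, $L = J$). Since $\cl$ commutes with localization at single elements, $(L_{f_\alpha})^\cl_{M_{f_\alpha}} = (L^\cl_M)_{f_\alpha} = N_{f_\alpha}$ inside $M_{f_\alpha}$ for every $\alpha$, so the hypothesis $z/1 \in (L_{f_\alpha})^\cl_{M_{f_\alpha}}$ simply says $z/1 \in N_{f_\alpha}$. Because localization is flat, $N_{f_\alpha} \hookrightarrow M_{f_\alpha}$, so this containment means precisely that there is an integer $m_\alpha \geq 0$ with $f_\alpha^{m_\alpha} z \in N$ (as elements of $M$).

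Next I would clear denominators using $g \in \sqrt I$. Since $f_\alpha \in \sqrt{(f_\alpha^{m_\alpha})}$ for each $\alpha$, the ideals $(f_\alpha : \alpha \in A)$ and $(f_\alpha^{m_\alpha} : \alpha \in A)$ have the same radical, namely $\sqrt I$; hence $g \in \sqrt{(f_\alpha^{m_\alpha} : \alpha \in A)}$. Radical membership involves only finitely many generators, so there are $\alpha_1, \dots, \alpha_r \in A$, elements $c_1, \dots, c_r \in R$, and an integer $k \geq 0$ with $g^k = \sum_{i=1}^r c_i f_{\alpha_i}^{m_{\alpha_i}}$. Multiplying by $z$ and using that each $f_{\alpha_i}^{m_{\alpha_i}} z$ lies in the submodule $N$, I obtain $g^k z = \sum_{i=1}^r c_i (f_{\alpha_i}^{m_{\alpha_i}} z) \in N$. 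Therefore $z/1 = (g^k z)/g^k \in N_g$ inside $M_g$, and applying commutation with localization once more, $N_g = (L^\cl_M)_g = (L_g)^\cl_{M_g}$, so $z/1 \in (L_g)^\cl_{M_g}$ (resp.\ $z/1 \in (J_g)^\cl$ in $R_g$). This is exactly glueability over $R$; the scheme version over $X$ then follows by applying the ring case to each affine open $U = \Spec R \subseteq X$.

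I do not expect a genuine obstacle: the entire content is the observation in the first step that commutation with localization at single elements turns the a priori incomparable closures $(L_{f_\alpha})^\cl_{M_{f_\alpha}}$ into localizations of one fixed submodule $L^\cl_M$. The only place requiring mild care is the radical bookkeeping --- remembering that $g \in \sqrt I$ yields a relation involving only finitely many $f_\alpha$, and that passing from the $f_\alpha$ to the powers $f_\alpha^{m_\alpha}$ leaves the radical unchanged --- and even that is routine.
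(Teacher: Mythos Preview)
Your proposal is correct and follows essentially the same argument as the paper: both use the commutation hypothesis to rewrite each $(L_{f_\alpha})^\cl_{M_{f_\alpha}}$ as $(L^\cl_M)_{f_\alpha}$, extract exponents $m_\alpha$ with $f_\alpha^{m_\alpha} z \in L^\cl_M$, then use the radical bookkeeping to write a power of $g$ as a finite $R$-combination of the $f_{\alpha_i}^{m_{\alpha_i}}$ and conclude $z/1 \in (L^\cl_M)_g$. The only cosmetic difference is that you explicitly invoke commutation a second time to pass from $(L^\cl_M)_g$ to $(L_g)^\cl_{M_g}$, whereas the paper leaves this final step implicit (it follows already from open persistence).
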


\begin{proof}
We prove the module case; the proof of the ideal case is identical.

Let $z, f_\alpha, I, g, L,M$ be as in the definition of glueability. Then for all $\alpha$, we have $z/1 \in (L_{f_\alpha})^\cl_{M_{f_\alpha}} = (L^\cl_M)_{f_\alpha}$.  Thus, for each $\alpha$, there is some positive integer $n_\alpha$ such that $f_\alpha^{n_\alpha}z \in L^\cl_M$.  Since radicals of ideals are insensitive to powers of generators, we have $g \in \sqrt{(\{f_\alpha^{n_\alpha}\}_{\alpha \in A})}$, whence for some positive integer $N$ we have $g^N \in (\{f_\alpha^{n_\alpha}\}_{\alpha \in A})$.  Thus, there exist $\alpha_1, \ldots, \alpha_s \in A$ and $r_1, \ldots, r_s \in R$ with $g^N = \sum_{i=1}^s r_i f_{\alpha_i}^{n_{\alpha_i}}$.  Hence, $g^N z = \sum_{i=1}^s r_i \cdot \left(f_{\alpha_i}^{n_{\alpha_i}}z\right) \in L^\cl_M$, so that $z/1 \in (L^\cl_M)_g$.
\end{proof}

It is important at this point to establish some properties of glueable preclosure operations.

\begin{prop}\label{pr:genindep}
Let $\cl$ be a glueable preclosure operation on submodules over $R$.  Let $I$ be an ideal of $R$, and let $(f_\alpha)_{\alpha \in A}$, $(g_\beta)_{\beta \in B}$ be two generating sets of ideals that have the same radical as $I$.  Let $M$ be an $R$-module, $L \subseteq M$ a submodule, and $z\in M$.  Then $z/1 \in (L_{f_\alpha})^\cl_{M_{f_\alpha}}$ over the ring $R_{f_\alpha}$ for all $\alpha \in A$, if and only if $z/1 \in (L_{g_\beta})^\cl_{M_{g_\beta}}$ over the ring $R_{g_\beta}$ for all $\beta \in B$.
\end{prop}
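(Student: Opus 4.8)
The plan is to reduce the statement to a single, direct application of glueability, together with an obvious symmetry observation. The hypotheses are symmetric in the two generating sets $(f_\alpha)_{\alpha\in A}$ and $(g_\beta)_{\beta\in B}$ (both generate ideals whose radical is $\sqrt{I}$), so it suffices to prove one implication: assuming $z/1 \in (L_{f_\alpha})^\cl_{M_{f_\alpha}}$ over $R_{f_\alpha}$ for every $\alpha \in A$, I would show $z/1 \in (L_{g_\beta})^\cl_{M_{g_\beta}}$ over $R_{g_\beta}$ for every $\beta \in B$; the reverse implication then follows by interchanging the roles of the $f$'s and $g$'s.

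To prove this implication, fix $\beta \in B$ and let $I' := (f_\alpha \mid \alpha \in A)$ be the ideal generated by the first generating set, so $(f_\alpha)_{\alpha\in A}$ is a generating set of $I'$. By hypothesis $\sqrt{I'} = \sqrt{I} = \sqrt{(g_\beta \mid \beta\in B)}$, and in particular $g_\beta \in (g_\gamma \mid \gamma \in B) \subseteq \sqrt{I'}$. Thus the ideal $I'$ with generating set $(f_\alpha)_{\alpha\in A}$, the element $g := g_\beta \in \sqrt{I'}$, the modules $L \subseteq M$, and the element $z \in M$ satisfy precisely the standing hypotheses in the definition of glueability, since by assumption $z/1 \in (L_{f_\alpha})^\cl_{M_{f_\alpha}}$ for all $\alpha \in A$. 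Glueability of $\cl$ over $R$ therefore yields $z/1 \in (L_{g_\beta})^\cl_{M_{g_\beta}}$ over $R_{g_\beta}$. Since $\beta\in B$ was arbitrary, the forward implication follows, and then symmetry gives the full equivalence.

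I do not anticipate a genuine obstacle here; the proof is essentially a bookkeeping exercise. The only two points that require a little care are: (i) one must apply glueability with the ideal $I'$ \emph{generated by} the $f_\alpha$'s rather than with $I$ itself, so that each $g_\beta$ is visibly in its radical (this is where the "same radical as $I$" hypothesis on the generating sets is used); and (ii) one should confirm that nothing in the definition of glueability requires the generating set $(f_\alpha)_{\alpha\in A}$ to be finite — it does not — so that arbitrary index sets $A$ and $B$ are permitted.
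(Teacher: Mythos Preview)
Your proof is correct and follows essentially the same approach as the paper: apply glueability directly using the fact that each $g_\beta$ lies in the radical of the ideal generated by the $f_\alpha$'s, and then invoke symmetry for the reverse implication. Your version is simply more explicit about the bookkeeping (naming $I'$, checking the hypotheses of glueability), but the argument is identical.
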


\begin{proof}
The forward direction follows from the definition because each $g_\beta \in \sqrt{(\{f_\alpha : \alpha \in A\})}$.  The backward direction follows from reversing the roles of the $f$s and the $g$s.
\end{proof}

\begin{cor}\label{cor:genunit}
Let $\cl$ be a glueable preclosure operation on submodules over $R$.  Let $(f_\alpha)_{\alpha \in A}$ be elements of $R$ that generate the unit ideal. Let $M$ be an $R$-module, $L$ a submodule of $M$, and $z\in M$. Then $z \in L^\cl_M$ if and only if for each $\alpha$, we have $z/1 \in (L_{f_\alpha})^\cl_{M_{f_\alpha}}$.
\end{cor}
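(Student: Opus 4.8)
The plan is to get both directions almost for free, using that glueability has open persistence built into it, and invoking glueability once with the radical element $g = 1$.

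For the forward implication, suppose $z \in L^\cl_M$. Since a glueable preclosure operation is by definition openly persistent over $\Spec R$, the Remark following the definition of open persistence applies: for each $\alpha$ we have $(L^\cl_M)_{f_\alpha} \subseteq (L_{f_\alpha})^\cl_{M_{f_\alpha}}$ inside $M_{f_\alpha}$ (this comes from the localization map $R \to R_{f_\alpha}$ corresponding to the open immersion $D(f_\alpha) \hookrightarrow \Spec R$). Since $z/1 \in (L^\cl_M)_{f_\alpha}$, we conclude $z/1 \in (L_{f_\alpha})^\cl_{M_{f_\alpha}}$, which is exactly what is claimed.

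For the backward implication, the key observation is that since $(f_\alpha)_{\alpha \in A}$ generates the unit ideal $I = R$, we have $1 \in R = \sqrt{I}$. Hence we may take $g = 1$ in the definition of glueability: the hypothesis ``$z/1 \in (L_{f_\alpha})^\cl_{M_{f_\alpha}}$ for all $\alpha$'' is precisely the hypothesis appearing there, so glueability over $R$ yields $z/1 \in (L_1)^\cl_{M_1}$. Under the canonical identifications $R_1 = R$, $L_1 = L$, $M_1 = M$, and $z/1 = z$, this says exactly $z \in L^\cl_M$.

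I do not expect a genuine obstacle here; the only point that needs care is recognizing that glueability may legitimately be invoked with $g = 1$ — i.e. that $1$ lies in the radical of the unit ideal — after which the argument is just bookkeeping with localization at $1$. (The ideal case, if one wanted it, would be word-for-word the same.)
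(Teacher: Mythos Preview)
Your proof is correct and essentially the same as the paper's. The paper simply invokes the preceding Proposition~\ref{pr:genindep} with $B$ a singleton and $g_\beta=1$; unpacking that proposition gives exactly your backward direction (glueability with $g=1$), while for the forward direction it applies glueability with generating set $\{1\}$ and $g=f_\alpha$, which amounts to the open persistence step you used.
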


\begin{proof}
In the above proposition, let $B$ be a singleton and $g=g_\beta =1$.
\end{proof}

\begin{thm}\label{thm:glueable}
Let $\cl$ be a glueable preclosure operation on submodules over a scheme $X$.  Let $\cM$ a quasi-coherent $\cO_X$-module, and $\cL$ a quasi-coherent $\cO_X$-submodule of $\cM$.  Let $U$ be an open set in $X$ and $s \in \cM(U)$.  Then $s\in \cL^\cl_\cM(U)$ if and only if for all affine open subsets $V$ of $U$, $s\vert_V \in \cL(V)^\cl_{\cM(V)}$.

Moreover, for any affine open subset $V$ of $X$, we have $\cL^\cl_\cM(V) = \cL(V)^\cl_{\cM(V)}$.

Indeed, $\cL^\cl_\cM$ is unique with respect to this property.  That is, suppose $\cF$ is a subsheaf of $\cM$ such that $\cF(V) = \cL(V)^\cl_{\cM(V)}$ for all open affine subsets $V$ of $X$.  Then $\cF = \cL^\cl_\cM$.
\end{thm}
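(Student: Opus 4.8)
The plan is to prove the three assertions in order, using glueability (via Corollary~\ref{cor:genunit}) as the main tool, together with the already-established fact from Theorem/Definition~\ref{thmdef:tcmod} that membership in $\cL^\cl_\cM(U)$ can be checked on any affine open cover. First I would handle the ``moreover'' statement for an affine open $V \subseteq X$, since the main equivalence will follow from it. Given $s \in \cL(V)^\cl_{\cM(V)}$, the definition of $\cL^\cl_\cM$ immediately gives $s \in \cL^\cl_\cM(V)$ (take the trivial cover of $V$ by itself). Conversely, suppose $s \in \cL^\cl_\cM(V)$. Then there is an affine open cover $\{V_\alpha\}$ of $V$ with $s\vert_{V_\alpha} \in \cL(V_\alpha)^\cl_{\cM(V_\alpha)}$. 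Since $V = \Spec R$ is affine and quasi-compact, I may assume the cover is finite and, shrinking further, that each $V_\alpha = D(f_\alpha)$ is a basic open set, so that $\cO_X(V_\alpha) = R_{f_\alpha}$; here I use that basic opens form a basis and Lemma~\ref{lem:openpers} to pass membership down to the $D(f_\alpha)$. Because $\cM$ and $\cL$ are quasi-coherent, $\cM(D(f_\alpha)) = \cM(V)_{f_\alpha}$ and $\cL(D(f_\alpha)) = \cL(V)_{f_\alpha}$, so the hypothesis reads $s/1 \in (\cL(V)_{f_\alpha})^\cl_{\cM(V)_{f_\alpha}}$ for all $\alpha$. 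The $f_\alpha$ generate the unit ideal of $R$ (as the $D(f_\alpha)$ cover $V$), so Corollary~\ref{cor:genunit} yields $s \in \cL(V)^\cl_{\cM(V)}$. This proves $\cL^\cl_\cM(V) = \cL(V)^\cl_{\cM(V)}$.

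Next I would deduce the main equivalence. Fix an open $U \subseteq X$ and $s \in \cM(U)$. If $s\vert_V \in \cL(V)^\cl_{\cM(V)}$ for every affine open $V \subseteq U$, then in particular this holds for the members of any affine open cover of $U$, so $s \in \cL^\cl_\cM(U)$ by the cover criterion in Theorem/Definition~\ref{thmdef:tcmod}. Conversely, suppose $s \in \cL^\cl_\cM(U)$ and let $V \subseteq U$ be any affine open. By the restriction axiom already verified for $\cL^\cl_\cM$, we have $s\vert_V \in \cL^\cl_\cM(V)$, and then the ``moreover'' statement (applied to the scheme $V$, or just to $V$ as an affine open of $X$) gives $s\vert_V \in \cL(V)^\cl_{\cM(V)}$, as desired.

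Finally, for uniqueness, suppose $\cF \subseteq \cM$ is a subsheaf with $\cF(V) = \cL(V)^\cl_{\cM(V)}$ for all affine open $V \subseteq X$. Both $\cF$ and $\cL^\cl_\cM$ are sheaves agreeing on the affine opens, which form a basis of $X$; since a sheaf is determined by its values on a basis (every section over an arbitrary open $U$ is the unique gluing of its restrictions to a basic cover, compatibly), it follows that $\cF = \cL^\cl_\cM$. Alternatively and more concretely: for open $U$ and $s \in \cM(U)$, the condition $s\vert_V \in \cF(V)$ for all affine open $V \subseteq U$ is equivalent, by the sheaf axioms for $\cF$, to $s \in \cF(U)$; and by the main equivalence just proved this same condition characterizes $s \in \cL^\cl_\cM(U)$.

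The only real subtlety—and the step I would be most careful about—is the reduction to basic opens and the invocation of quasi-coherence of \emph{both} $\cL$ and $\cM$ in the first part: one must know $\cL(D(f)) = \cL(V)_f$ inside $\cM(D(f)) = \cM(V)_f$ so that the localized closure $(\cL(V)_f)^\cl_{\cM(V)_f}$ is literally the object appearing in the definition of $\cL^\cl_\cM$ over $D(f)$. This is exactly where the hypothesis that $\cL$ is quasi-coherent (not merely a subsheaf) is used, and it is the hinge that lets Corollary~\ref{cor:genunit} apply verbatim; everything else is bookkeeping with the sheaf axioms and Lemma~\ref{lem:openpers}.
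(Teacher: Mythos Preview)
Your proof is correct and uses the same essential ingredients as the paper's proof: refinement to basic opens via Lemma~\ref{lem:openpers}, quasi-coherence of both $\cL$ and $\cM$ to identify sections over $D(f)$ with localizations, and Corollary~\ref{cor:genunit} to glue. The only difference is organizational: the paper proves the main equivalence directly for an arbitrary affine $V\subseteq U$ (covering $U_\alpha\cap V$ by basic opens of $V$) and then deduces the ``moreover'' statement as the special case $V\subseteq V$, whereas you prove the ``moreover'' statement first and then obtain the main equivalence by invoking the restriction axiom for $\cL^\cl_\cM$; your ordering is slightly more modular but the content is the same.
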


\begin{proof}
The ``if'' direction in the first paragraph is immediate.  So suppose $s\in \cL^\cl_\cM(U)$, and let $V$ be an affine open subset of $U$. 
Since $V$ is affine, we have $V=\Spec R$ for some $R$.  Let $\{U_\alpha\}_{\alpha \in A}$ be an affine open cover of $U$ such that $s\vert_{U_\alpha} \in \cL(U_\alpha)^\cl_{\cM(U_\alpha)}$ for all $\alpha \in A$.  For each $\alpha \in A$, since $U_\alpha \cap V$ is an open subset of $V$, one can find a positive integer $n_\alpha$ and elements $g_{\alpha,1}, \ldots, g_{\alpha, n_\alpha} \in R$ such that $U_\alpha \cap V = \bigcup_{i=1}^{n_\alpha} D_R(g_{\alpha, i})$.  For any maximal ideal $\m$ of $R$, we have $\m \in U_\alpha \cap V$ for some $\alpha$, whence there is some $g_{\alpha,i}$, $1\leq i \leq n_\alpha$, such that $\m \in D(g_{\alpha,i})$, which means that $g_{\alpha,i} \notin \m$.  It follows that the set $\{g_{\alpha,i} \mid \alpha \in A, 1\leq i \leq n_\alpha\}$ generates the unit ideal $R$.

Now let $M = \cM(V)$ and $L = \cL(V)$.  Write $z=s\vert_V \in M$.  Then for any pair $(\alpha,i)$, we have $s\vert_{D(g_{\alpha,i})} = (s\vert_{U_\alpha})\vert_{D(g_{\alpha,i})} \in \cL(D(g_{\alpha,i}))^\cl_{\cM(D(g_{\alpha,i}))}$.  But by quasi-coherence of the sheaves $\cL$ and $\cM$, we have $\cL(D(g_{\alpha,i})) = L_{g_{\alpha,i}}$, $\cM(D(g_{\alpha,i})) = M_{g_{\alpha,i}}$, and $s\vert_{D(g_{\alpha,i})} = z/1 \in M_{g_{\alpha,i}}$.  Hence, $z/1 \in (L_{g_{\alpha,i}})^\cl_{M_{g_{\alpha,i}}}$.  Since this holds for all such pairs, and since the $g_{\alpha,i}$ generate $R$, then by Corollary~\ref{cor:genunit}, we have $s\vert_V=z\in L^\cl_M = \cL(V)^\cl_{\cM(V)}$.

For the ``moreover'' statement, first let $s\in \cL^\cl_\cM(V)$.  Then since $V$ is an affine open subset of itself, we have $s = s\vert_V \in \cL(V)^\cl_{\cM(V)}$ by what we have shown above. Conversely, let $s\in \cL(V)^\cl_{\cM(V)}$. Then for any open affine subset $W$ of $V$, open persistence of $\cl$ shows that $s\vert_W \in \cL(W)^\cl_{\cM(W)}$.  Hence, $s\in \cL^\cl_\cM(V)$.

For the ``indeed'' statement, let $U$ be an open subset of $X$ and $s\in \cM(U)$.  If $s\in \cF(U)$, then by the restriction axiom, for any open affine subset $V$ of $U$, we have $s\vert_V \in \cF(V) = \cL(V)^\cl_{\cM(V)}$.  Thus, $s \in \cL^\cl_\cM(U)$.  Conversely, suppose $s\in \cL^\cl_\cM(U)$. Let $\{U_\alpha \mid \alpha \in A\}$ be an affine open cover of $U$.  Then for each $\alpha\in A$, we have $s\vert_{U_\alpha} \in \cL(U_\alpha)^\cl_{\cM(U_\alpha)} = \cF(U_\alpha)$.  Since $\cF$ is a subsheaf of $\cM$, it follows that $s\in \cF(U)$. Thus, $\cF = \cL^\cl_\cM$.
\end{proof}

One can make a similar statement for glueable preclosure operations on \emph{ideals} over a scheme, with identical proof:

\begin{thm}\label{thm:glueableideals}
Let $\cl$ be a glueable preclosure operation on ideals over a scheme $X$.  Let $\cI$ a quasi-coherent sheaf of $\cO_X$-ideals.  Let $U$ be an open set in $X$ and $s \in \cO_X(U)$.  Then $s\in \cI^\cl(U)$ if and only if for all affine open subsets $V$ of $U$, $s\vert_V \in \cI(V)^\cl$ in $\cO_X(V)$. 

Moreover, for any affine open subset $V$ of $X$, we have $\cI^\cl(V) = \cI(V)^\cl$ in $\cO_X(V)$.

Indeed, $\cI^\cl$ is unique with respect to this property.  That is, suppose $\cF$ is a sheaf of $\cO_X$-ideals such that $\cF(V) = \cI(V)^\cl$ for all open affine subsets $V$ of $X$.  Then $\cF = \cI^\cl$.
\end{thm}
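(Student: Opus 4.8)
The plan is to follow, essentially verbatim, the proof of Theorem~\ref{thm:glueable}, with the ambient quasi-coherent module $\cM$ replaced by $\cO_X$ and the quasi-coherent submodule $\cL$ replaced by the quasi-coherent ideal sheaf $\cI$. The one structural point to dispose of at the outset is that the preparatory results Proposition~\ref{pr:genindep} and Corollary~\ref{cor:genunit}, stated there for submodules, have ideal analogues valid under the (formally weaker) hypothesis that $\cl$ is glueable \emph{on ideals}: their proofs only ever invoke glueability for a pair $L\subseteq M$, and specializing to $M=R$ (and $L=J$ an ideal) turns each such invocation into precisely an instance of glueability on ideals over the relevant localization $R_{f_\alpha}$. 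So I would first record the ideal versions of these two statements, with the same proofs, and then run the argument.

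For the first claim, the ``if'' direction is immediate: if $s\vert_V\in\cI(V)^\cl$ for \emph{every} affine open $V\subseteq U$, then this holds in particular along any affine open cover of $U$, so $s\in\cI^\cl(U)$ by the defining characterization of $\cI^\cl$ via affine open covers. For the ``only if'' direction, suppose $s\in\cI^\cl(U)$ and fix an affine open $V=\Spec R$ inside $U$. Choose an affine open cover $\{U_\alpha\}$ of $U$ with $s\vert_{U_\alpha}\in\cI(U_\alpha)^\cl$ for all $\alpha$. Each $U_\alpha\cap V$ is open in $\Spec R$, hence a union of distinguished opens $D_R(g_{\alpha,i})$; since every maximal ideal of $R$ lies in $U_\alpha\cap V$ for some $\alpha$, the collection $\{g_{\alpha,i}\}$ (over all $\alpha$ and $i$) generates the unit ideal of $R$. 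Writing $z=s\vert_V$ and using quasi-coherence of $\cI$ and $\cO_X$ to identify $\cI(D_R(g_{\alpha,i}))=\cI(V)_{g_{\alpha,i}}$ and $s\vert_{D_R(g_{\alpha,i})}=z/1$, the openly-persistent restriction $s\vert_{D_R(g_{\alpha,i})}=(s\vert_{U_\alpha})\vert_{D_R(g_{\alpha,i})}$ lies in $\big(\cI(V)_{g_{\alpha,i}}\big)^\cl$ for every pair $(\alpha,i)$. The ideal version of Corollary~\ref{cor:genunit} then forces $z\in\cI(V)^\cl$, as desired.

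The ``moreover'' statement follows by taking $U=V$ affine in the first claim for one inclusion, and by open persistence together with quasi-coherence (which gives $\cI(W)=\cI(V)\,\cO_X(W)$ for affine $W\subseteq V$, hence $s\vert_W\in\cI(W)^\cl$ whenever $s\in\cI(V)^\cl$) for the reverse inclusion; thus $\cI^\cl(V)=\cI(V)^\cl$. For uniqueness, let $\cF\subseteq\cO_X$ be a sheaf of ideals with $\cF(V)=\cI(V)^\cl$ on all affine opens. If $s\in\cF(U)$, the restriction axiom for $\cF$ together with the first claim gives $s\in\cI^\cl(U)$; conversely, if $s\in\cI^\cl(U)$, then along an affine open cover $\{U_\alpha\}$ of $U$ we have $s\vert_{U_\alpha}\in\cI^\cl(U_\alpha)=\cI(U_\alpha)^\cl=\cF(U_\alpha)$ by the ``moreover'' part, and the gluing axiom for $\cF$ yields $s\in\cF(U)$. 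Hence $\cF=\cI^\cl$.

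I do not anticipate a serious obstacle: the argument is a transcription of the module case, and the only things requiring a moment's care are the remark above that glueability on ideals already suffices to run Proposition~\ref{pr:genindep} and Corollary~\ref{cor:genunit}, and keeping the quasi-coherence identifications straight when passing between a distinguished open $D_R(g)$ and the localization $R_g$ it represents (and, correspondingly, between $s\vert_{D_R(g)}$ and $z/1$).
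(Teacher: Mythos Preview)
Your proposal is correct and takes essentially the same approach as the paper: the paper itself simply declares the proof ``identical'' to that of Theorem~\ref{thm:glueable}, and your transcription faithfully follows that argument with $\cM=\cO_X$ and $\cL=\cI$. Your explicit remark that Proposition~\ref{pr:genindep} and Corollary~\ref{cor:genunit} specialize (with the same proofs) to ideal versions under the hypothesis of glueability on ideals is a small but worthwhile bit of bookkeeping that the paper leaves implicit.
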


\section{Quasi-coherence and  localization at elements}
This section examines conditions for quasi-coherence of (pre)closures of subsheaves, as well as conditions for subsheaves being closed.

\begin{thm}\label{thm:glueqcoh}
Let $R$ be a ring, $M$ an $R$-module, $L \subseteq M$, and $\cl$ a glueable preclosure operation on submodules over $\Spec R$
.  The following are equivalent: \begin{enumerate}[(a)]
    \item For any $f\in R$, we have $(L_f)^\cl_{M_f} = (L^\cl_M)_f$.
    \item The sheaf $\tilde{L}^\cl_{\tilde{M}}$ over $\Spec R$ is quasi-coherent.
\end{enumerate}
\end{thm}

\begin{proof}
(b) $\implies$ (a): Let $X=\Spec R$, $V = D(f)$, $\cL = \tilde L$, and $\cM = \tilde M$.  Then $\cL^\cl_\cM(V) = \cL(V)^\cl_{\cM(V)} = (L_f)^\cl_{M_f}$.  On the other hand, by quasi-coherence of $\cL^\cl_\cM$, we have $\cL^\cl_\cM(V) = \cL^\cl_\cM(X) \otimes_{\cO_X(X)} \cO_X(V) = L^\cl_M \otimes_R R_f = (L^\cl_M)_f$.

(a) $\implies$ (b): Let $X =\Spec R$.  Let $C = L^\cl_M$.  By Theorem~\ref{thm:glueable}, $\tilde L^\cl_{\tilde M}(X) = L^\cl_M$.  Since $\tilde C$ is quasi-coherent and $\tilde C(X) = L^\cl_M = \tilde L^\cl_{\tilde M}(X)$, it follows that $\tilde C$ is a subsheaf of $\tilde L^\cl_{\tilde M}$.
On the other hand, let $U$ be an open set and let $s\in \tilde L^\cl_{\tilde M}(U)$.  Then there is some open affine cover $\{U_\alpha \mid \alpha \in A\}$ of $U$ with $s\vert_{U_\alpha} \in \tilde L(U_\alpha)^\cl_{\tilde M(U_\alpha)}$.  We can refine the cover so that $U_\alpha = D(g_\alpha)$ for some elements $g_\alpha \in R$. Then by open persistence and the assumption (a), we have $s\vert_{D(g_\alpha)} \in (L_{g_\alpha})^\cl_{M_{g_\alpha}} = (L^\cl_M)_{g_\alpha} = \tilde C(D(g_\alpha))$.  Hence, $s \in \tilde C(U)$.  Therefore, $\tilde L^\cl_{\tilde M} = \tilde C$.
\end{proof}

\begin{prop}\label{pr:Lclosed}
Let $X$ be a scheme, and $\cl$ a glueable preclosure operation on modules over $X$.  Let $\cL \subseteq \cM$ be quasi-coherent $\cO_X$-modules.  The following are equivalent. \begin{enumerate}[(a)]
    \item $\cL=\cL^\cl_\cM$.
    \item There is an open affine cover $\{U_\alpha \mid \alpha \in A\}$ of $X$ such that for each $\alpha$, $\cL(U_\alpha) = \cL(U_\alpha)^\cl_{\cM(U_\alpha)}$, and $\cL^\cl_\cM$ is quasi-coherent.
    \item For any open affine subset $V$ of $X$, we have $\cL(V) = \cL(V)^\cl_{\cM(V)}$.
\end{enumerate}
If $X=\Spec R$ is affine, another equivalent condition is the following: \begin{enumerate}[({d})]
    \item For all $f\in R$, $L_f = (L_f)^\cl_{M_f}$, where $L=\cL(X)$ and $M = \cM(X)$.
\end{enumerate}
\end{prop}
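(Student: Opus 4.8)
The plan is to prove the equivalence of $(a)$, $(b)$, $(c)$ via the implications $(a)\Rightarrow(c)$, $(c)\Rightarrow(a)$, $(a)\Rightarrow(b)$, $(b)\Rightarrow(a)$, and then to handle $(d)$ separately in the affine case by $(a)\Rightarrow(d)$ and $(d)\Rightarrow(a)$. The workhorses are the ``moreover'' clause of Theorem~\ref{thm:glueable}, which identifies $\cL^\cl_\cM(V)$ with $\cL(V)^\cl_{\cM(V)}$ on every affine open $V\subseteq X$, the elementary fact that a subsheaf of a fixed sheaf is determined by its sections over a basis of open sets (so affine opens suffice), the fact that a quasi-coherent sheaf on an affine scheme is recovered from its module of global sections, and, for $(d)$, Theorem~\ref{thm:glueqcoh}.

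Once $\cL^\cl_\cM(V)=\cL(V)^\cl_{\cM(V)}$ is recorded for all affine open $V$, the easy implications fall out. For $(a)\Rightarrow(c)$, substitute $\cL^\cl_\cM=\cL$ to get $\cL(V)^\cl_{\cM(V)}=\cL(V)$. For $(c)\Rightarrow(a)$, $(c)$ combined with the identification gives $\cL^\cl_\cM(V)=\cL(V)$ for all affine open $V$; since affine opens form a basis and both $\cL$ and $\cL^\cl_\cM$ are subsheaves of $\cM$, they coincide. For $(a)\Rightarrow(b)$, under $(a)$ the sheaf $\cL^\cl_\cM=\cL$ is quasi-coherent, and any affine open cover $\{U_\alpha\}$ of $X$ (one exists since $X$ is a scheme) works, as $\cL(U_\alpha)^\cl_{\cM(U_\alpha)}=\cL^\cl_\cM(U_\alpha)=\cL(U_\alpha)$.

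The substantive step, and the one I expect to be the main obstacle, is $(b)\Rightarrow(a)$: condition $(b)$ only asserts agreement of $\cL$ with $\cL^\cl_\cM$ on one particular affine cover (rather than on all affine opens), and the point is that the extra quasi-coherence hypothesis is exactly what bridges this gap. My approach is to work on each $U_\alpha=\Spec R_\alpha$ of the given cover. By Theorem~\ref{thm:glueable} and the hypothesis of $(b)$, the submodules $\cL^\cl_\cM(U_\alpha)=\cL(U_\alpha)^\cl_{\cM(U_\alpha)}=\cL(U_\alpha)$ of $\cM(U_\alpha)$ coincide; since $\cL|_{U_\alpha}$ and $\cL^\cl_\cM|_{U_\alpha}$ are quasi-coherent subsheaves of $\cM|_{U_\alpha}$, each equals the sheaf associated to its own module of global sections, so the equality of those global sections forces $\cL|_{U_\alpha}=\cL^\cl_\cM|_{U_\alpha}$. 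As this holds on a cover of $X$, $\cL=\cL^\cl_\cM$.

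For the affine case $X=\Spec R$, write $L=\cL(X)$ and $M=\cM(X)$. Then $(a)\Rightarrow(d)$ is obtained by applying $(c)$ (which is equivalent to $(a)$) to the basic opens $V=D(f)$ and using quasi-coherence of $\cL$ and $\cM$ to identify $\cL(D(f))=L_f$ and $\cM(D(f))=M_f$. For $(d)\Rightarrow(a)$, take $f=1$ in $(d)$ to get $L^\cl_M=L$; then $(L_f)^\cl_{M_f}=L_f=(L^\cl_M)_f$ for every $f$, so Theorem~\ref{thm:glueqcoh} shows that $\widetilde{L}^\cl_{\widetilde{M}}$ is quasi-coherent, and being quasi-coherent on $\Spec R$ it is the sheaf associated to its module of global sections, namely $L^\cl_M=L$ (Theorem~\ref{thm:glueable}); hence $\cL^\cl_\cM=\widetilde{L}^\cl_{\widetilde{M}}=\widetilde{L}=\cL$, which is $(a)$. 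Everything else is bookkeeping with Theorems~\ref{thm:glueable} and~\ref{thm:glueqcoh}.
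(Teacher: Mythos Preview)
Your proposal is correct and follows essentially the same route as the paper's proof: both rely on the ``moreover'' clause of Theorem~\ref{thm:glueable} for $(a)\Leftrightarrow(c)$ and $(a)\Rightarrow(b)$, use quasi-coherence on each $U_\alpha$ to upgrade the single-cover information in $(b)$ to equality of sheaves, and invoke Theorem~\ref{thm:glueqcoh} together with the case $f=1$ for the affine statement $(d)$. The only cosmetic differences are that the paper cites the uniqueness clause of Theorem~\ref{thm:glueable} for $(c)\Rightarrow(a)$ (where you use the equivalent fact that subsheaves are determined on a basis) and routes $(d)$ through $(b)$ rather than directly to $(a)$.
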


\begin{proof}
(a) $\implies$ (c): By Theorem~\ref{thm:glueable}, $\cL^\cl_\cM(V) = \cL(V)^\cl_{\cM(V)}$.  But by assumption, $\cL^\cl_\cM(V) = \cL(V)$.

(c) $\implies$ (a): This follows from the last part of Theorem~\ref{thm:glueable}.

(a) $\implies$ (b): Quasi-coherence is automatic. So let $\{U_\alpha \mid \alpha \in A\}$ be any open affine cover of $X$.  Then by Theorem~\ref{thm:glueable}, for any $\alpha \in A$ we have $\cL(U_\alpha)^\cl_{\cM(U_\alpha)} = \cL^\cl_\cM(U_\alpha) = \cL(U_\alpha)$.

(b) $\implies$ (a): Let $U$ be an open subset of $X$, let $s\in \cL^\cl_\cM(U)$, and let $x\in U$.  Then there is some $\alpha \in A$ such that $x \in U_\alpha$.  Let $V$ be an open affine set such that $x \in V \subseteq U \cap U_\alpha$.  Then by the assumptions in (b) and by Theorem~\ref{thm:glueable}, we have $s\vert_V \in \cL^\cl_\cM(V) = \cL^\cl_\cM(U_\alpha)\vert_V = \cL(U_\alpha)^\cl_{\cM(U_\alpha)}\vert_V = \cL(U_\alpha)\vert_V = \cL(V)$.  Hence, $s\in \cL(U)$.  Thus, $\cL(U) = \cL^\cl_\cM(U)$.  Since $U$ was arbitrary, $\cL = \cL^\cl_\cM$.

Now we handle the special case where $X=\Spec R$ is affine.

(c) $\implies$ (d): Just apply the statement to the set $V=D(f)$.

(d) $\implies$ (b): For each $f\in R$, we have $(L_f)^\cl_{M_f} = L_f = (L^\cl_M)_f$ (the second equality by applying the assumption in (d) to $1\in R$ and then localizing at $f$).  Hence by Theorem~\ref{thm:glueqcoh}, $\cL^\cl_\cM$ is quasi-coherent.  The first statement in (b) follows from considering the open affine cover $\{D(f) \mid f\in R\}$ of $\Spec R$.
\end{proof}

\begin{prop}\label{pr:allsubsclosed}
Let $\cl$ a glueable preclosure operation on  modules over a scheme $X$. Let $\cM$ be a quasi-coherent $\cO_X$-module.
The following are equivalent: \begin{enumerate}[(a)]
    \item\label{it:clreg-qcmods} For any quasi-coherent sheaf $\cL$ of submodules of $\cM$, we have $\cL^\cl_\cM = \cL$.
    \item\label{it:clreg-modsh} For any sheaf $\cL$ of submodules of $\cM$, we have $\cL^\cl_\cM = \cL$.
    \item\label{it:clreg-mdaffines} For any affine open subset $U$ of $X$, every $\cO_X(U)$-submodule of $\cM(U)$ is $\cl$-closed.
\end{enumerate}
\end{prop}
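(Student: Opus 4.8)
The plan is to prove the cycle of implications $(\ref{it:clreg-mdaffines}) \implies (\ref{it:clreg-modsh}) \implies (\ref{it:clreg-qcmods}) \implies (\ref{it:clreg-mdaffines})$. The last implication is essentially immediate: if every quasi-coherent subsheaf of $\cM$ is $\cl$-closed, then applying this to $\cL = \widetilde{L}$ for an arbitrary $\cO_X(U)$-submodule $L \subseteq \cM(U)$ over an affine open $U = \Spec R$, and invoking the ``moreover'' part of Theorem~\ref{thm:glueable} (namely $\cL^\cl_\cM(U) = \cL(U)^\cl_{\cM(U)} = L^\cl_{\cM(U)}$), gives $L = L^\cl_{\cM(U)}$. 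The implication $(\ref{it:clreg-modsh}) \implies (\ref{it:clreg-qcmods})$ is trivial, since a quasi-coherent subsheaf is in particular a subsheaf.

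The substantive direction is $(\ref{it:clreg-mdaffines}) \implies (\ref{it:clreg-modsh})$. Here I would argue locally: given an arbitrary sheaf of submodules $\cL \subseteq \cM$, an open set $U$, and a section $s \in \cL^\cl_\cM(U)$, I want $s \in \cL(U)$. By the sheaf axiom it suffices to show $s\vert_V \in \cL(V)$ for $V$ ranging over an open cover of $U$, so I may take $V$ to be an affine open subset of $U$ on which $s\vert_V \in \cL(V)^\cl_{\cM(V)}$ — such a cover exists by the very definition of $\cL^\cl_\cM$ in Theorem/Definition~\ref{thmdef:tcmod}. Writing $V = \Spec R$, $M = \cM(V)$, $L = \cL(V)$, hypothesis $(\ref{it:clreg-mdaffines})$ says $L = L^\cl_M$, so $s\vert_V \in \cL(V)^\cl_{\cM(V)} = \cL(V)$, as desired.

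The one point requiring care — and the place I expect to spend the most attention — is that in the definition of $\cL^\cl_\cM$, membership $s \in \cL^\cl_\cM(U)$ only guarantees, for each $x \in U$, \emph{some} affine open $V \ni x$ with $s\vert_V \in \cL(V)^\cl_{\cM(V)}$; it does not a priori say this holds for every affine open $V$ (that stronger statement is Theorem~\ref{thm:glueable}, which requires $\cL$ quasi-coherent, and here $\cL$ need not be). But this is exactly why the ``cover'' reformulation in Theorem/Definition~\ref{thmdef:tcmod} suffices: I only need \emph{one} good affine cover, and on each member of it hypothesis $(\ref{it:clreg-mdaffines})$ applies directly. So no appeal to glueability or to Theorem~\ref{thm:glueable} is actually needed for $(\ref{it:clreg-mdaffines}) \implies (\ref{it:clreg-modsh})$ — glueability is only invoked (via Theorem~\ref{thm:glueable}) in the easy implication $(\ref{it:clreg-qcmods}) \implies (\ref{it:clreg-mdaffines})$, to identify $\cL^\cl_\cM(U)$ with $\cL(U)^\cl_{\cM(U)}$ on affines.
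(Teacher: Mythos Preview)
Your cycle $(\ref{it:clreg-mdaffines}) \Rightarrow (\ref{it:clreg-modsh}) \Rightarrow (\ref{it:clreg-qcmods}) \Rightarrow (\ref{it:clreg-mdaffines})$ matches the paper's exactly, and your arguments for $(\ref{it:clreg-modsh}) \Rightarrow (\ref{it:clreg-qcmods})$ and $(\ref{it:clreg-mdaffines}) \Rightarrow (\ref{it:clreg-modsh})$ are essentially identical to the paper's (including your correct observation that glueability is not needed for the latter).

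There is, however, a genuine gap in your $(\ref{it:clreg-qcmods}) \Rightarrow (\ref{it:clreg-mdaffines})$. You write ``applying this to $\cL = \widetilde{L}$,'' but $\widetilde{L}$ is a sheaf on the affine open $U = \Spec R$, not on $X$. Hypothesis $(\ref{it:clreg-qcmods})$ concerns quasi-coherent subsheaves of $\cM$ \emph{on $X$}, so you cannot apply it directly to $\widetilde{L}$. What is needed is an extension step: given the $\cO_X(U)$-submodule $L \subseteq \cM(U)$, one must produce a quasi-coherent $\cO_X$-submodule $\cL \subseteq \cM$ on all of $X$ with $\cL\vert_U = \widetilde{L}$. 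This is exactly what the paper does, citing \cite[Tag 01PE]{stacks-project}. Once such $\cL$ exists, your invocation of the ``moreover'' clause of Theorem~\ref{thm:glueable} goes through: $L^{\cl}_{\cM(U)} = \cL(U)^{\cl}_{\cM(U)} = \cL^{\cl}_\cM(U) = \cL(U) = L$. Without the extension, the step ``apply (\ref{it:clreg-qcmods})'' has no sheaf on $X$ to apply to, and the implication is not established. This is not a triviality one can wave away as ``essentially immediate'': extending a quasi-coherent subsheaf from an open set to the whole scheme, while remaining a subsheaf of a fixed quasi-coherent sheaf, is a nontrivial (though standard) fact that requires citation or proof.
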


\begin{proof}
\ref{it:clreg-modsh} $\implies$ \ref{it:clreg-qcmods}: Automatic.

\ref{it:clreg-qcmods} $\implies$ \ref{it:clreg-mdaffines}: Let $L$ be an $\cO_X(U)$-submodule of $M := \cM(U)$.  Then by \cite[Tag 01PE]{stacks-project}, there is a quasi-coherent sheaf $\cL$ of $\cO_X$-submodules of $\cM$ such that $\cL\vert_U = \tilde L$.  Then by Theorem~\ref{thm:glueable}, $L^\cl_M = \cL(U)^\cl_{\cM(U)} = \cL^\cl_\cM(U) = \cL(U) = L$. 

\ref{it:clreg-mdaffines} $\implies$ \ref{it:clreg-modsh}: Let $U$ be an open subset of $X$.  Let $s \in \cL^\cl_\cM(U)$.  Then there is some affine open cover $\{U_\alpha \mid \alpha \in A\}$ of $U$ such that for all $\alpha \in A$, we have $s\vert_{U_\alpha} \in \cL(U_\alpha)^\cl_{\cM(U_\alpha)} = \cL(U_\alpha)$. Then since $\cL$ is a subsheaf of $\cM$, it follows that $s \in \cL(U)$.  Hence $\cL^\cl_\cM(U) = \cL(U)$ for all $U$, so that $\cL^\cl_\cM = \cL$.
\end{proof}

We also note the following ideal-theoretic version, which admits precisely the same proof.

\begin{prop}
Let $\cl$ a glueable preclosure operation on ideals over a scheme $X$. 
The following are equivalent: \begin{enumerate}[(a)]
    \item\label{it:clreg-qcideals} For any quasi-coherent sheaf $\cI$ of ideals on $X$, we have $\cI^\cl = \cI$.
    \item\label{it:clreg-idsh} For any ideal sheaf $\cI$ on $X$, we have $\cI^\cl = \cI$.
    \item\label{it:clreg-affines} For any affine open subset $U$ of $X$, every ideal of $\cO_X(U)$ is $\cl$-closed.
\end{enumerate}
\end{prop}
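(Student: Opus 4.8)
The plan is to transcribe the proof of Proposition~\ref{pr:allsubsclosed} essentially word for word, specializing the ambient quasi-coherent module $\cM$ there to the structure sheaf $\cO_X$. Under this specialization, quasi-coherent sheaves of submodules of $\cM$ become quasi-coherent sheaves of ideals, arbitrary sheaves of submodules of $\cM$ become arbitrary ideal sheaves, and every appeal to Theorem~\ref{thm:glueable} is replaced by an appeal to its ideal-theoretic counterpart Theorem~\ref{thm:glueableideals}. Concretely, I would establish the cycle $(\ref{it:clreg-idsh}) \implies (\ref{it:clreg-qcideals}) \implies (\ref{it:clreg-affines}) \implies (\ref{it:clreg-idsh})$.

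The implication $(\ref{it:clreg-idsh}) \implies (\ref{it:clreg-qcideals})$ is immediate, since a quasi-coherent sheaf of ideals is in particular an ideal sheaf. For $(\ref{it:clreg-qcideals}) \implies (\ref{it:clreg-affines})$: given an affine open $U \subseteq X$ and an ideal $I$ of $R := \cO_X(U)$, I invoke \cite[Tag 01PE]{stacks-project} exactly as in the proof of Proposition~\ref{pr:allsubsclosed} (with ambient sheaf $\cO_X$) to extend $\tilde{I} \subseteq \cO_X\vert_U$ to a quasi-coherent sheaf of ideals $\cI \subseteq \cO_X$ on all of $X$ with $\cI\vert_U = \tilde{I}$, so that $\cI(U) = I$. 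Then the ``moreover'' clause of Theorem~\ref{thm:glueableideals} together with hypothesis $(\ref{it:clreg-qcideals})$ gives $I^\cl = \cI(U)^\cl = \cI^\cl(U) = \cI(U) = I$, so $I$ is $\cl$-closed.

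For $(\ref{it:clreg-affines}) \implies (\ref{it:clreg-idsh})$: let $\cI$ be any ideal sheaf on $X$. Since $(-)^\cl$ is extensive it suffices to show $\cI^\cl \subseteq \cI$. Given an open $U \subseteq X$ and $s \in \cI^\cl(U)$, the defining description of $\cI^\cl$ (the ideal version of Theorem/Definition~\ref{thmdef:tcmod}, as recorded in the subsequent ideal-theoretic Theorem/Definition) furnishes an affine open cover $\{U_\alpha\}_{\alpha \in A}$ of $U$ with $s\vert_{U_\alpha} \in \cI(U_\alpha)^\cl$ in $\cO_X(U_\alpha)$ for all $\alpha$; by $(\ref{it:clreg-affines})$ this ideal equals $\cI(U_\alpha)$, so $s\vert_{U_\alpha} \in \cI(U_\alpha)$ for every $\alpha$, and since $\cI$ is a subsheaf of $\cO_X$ the section $s$ lies in $\cI(U)$. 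Hence $\cI^\cl(U) = \cI(U)$ for all $U$, i.e.\ $\cI^\cl = \cI$.

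The only point that requires care — and it is the exact analogue of the main subtlety in Proposition~\ref{pr:allsubsclosed} — is the extension step in $(\ref{it:clreg-qcideals}) \implies (\ref{it:clreg-affines})$: one must extend the quasi-coherent ideal sheaf $\tilde{I}$ from $U$ to all of $X$ in such a way that it remains a \emph{subsheaf of $\cO_X$}, which is precisely what the cited Stacks Project result supplies. Everything else is a routine transcription of the module argument.
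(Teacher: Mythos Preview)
Your proposal is correct and matches the paper's own approach exactly: the paper states that this ideal-theoretic proposition ``admits precisely the same proof'' as Proposition~\ref{pr:allsubsclosed}, and your transcription (specializing $\cM$ to $\cO_X$ and replacing Theorem~\ref{thm:glueable} by Theorem~\ref{thm:glueableideals}) is just that. The cycle of implications and the use of \cite[Tag 01PE]{stacks-project} for the extension step are identical to the module case.
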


\section{Application to tight closure and its variants}\label{sec:tc}

We start with the following definition.
\begin{defn}\label{def:psys}
Let $R$ be an $\F_p$-algebra.  A \emph{$p$-system of ideals} is a sequence of ideals $\ib_\bullet = \{\ib_{p^e}\}_{e=1}^\infty$ such that for all pairs $q,q'$ of powers of $p$, we have $\ib_q \ib_{q'}^{[q]} \subseteq \ib_{qq'}$.
\end{defn}

\begin{rem}
Let $R$ be an $\F_p$-algebra and $\ib_\bullet = \{\ib_{p^e}\}_{e=1}^\infty$ a $p$-system of ideals.  Then $\ib_\bullet$ is \begin{itemize}
    \item a \emph{$p$-family of ideals} \cite[Definition 1.1]{HeJe-Okounkov} if they satisfy the stronger condition that $\ib_q^{[p]} \subseteq \ib_{pq}$ for all powers $q$ of $p$.
    \item a \emph{$F$-graded system of ideals} \cite[Definition 3.20]{Bl-testpe} if it is descending (i.e. $\ib_q \supseteq \ib_{pq}$ for all $q$).
\end{itemize}
Hence, the notion of $p$-system is a common generalization of $p$-families and $F$-graded systems of ideals.
\end{rem}

With this setup, we can introduce the following definition.
\begin{defn}
Let $R$ be a Noetherian $\F_p$-algebra, let $\ib_\bullet$ be a $p$-system of ideals, and let $L \subseteq M$ be $R$-modules.  Then for $z\in M$, we say $z$ is in the \emph{$\ib_\bullet$-tight closure of $L$ in $M$} if there is some $c\in R^\circ$ and some power $q_0$ of $p$ such that for all $q\geq q_0$, we have \[
c \ib_q z^q_M \subseteq L^{[q]}_M.
\]
We write $z\in L^{*\ib_\bullet}_M$.
\end{defn}

This generalizes several notions of tight closure  in the literature, e.g.: \begin{itemize}
\item Setting $\ib_q := R$ for all powers $q$ of $p$, this just gives the tight closure operation of Hochster and Huneke \cite{HHmain}.
\item If $\ia_1, \ldots, \ia_n$ are ideals of $R$ such that $\ia_j \cap R^\circ \neq \emptyset$ for each $j$, and $t_1, \ldots, t_n$ are fixed positive real numbers, then setting $\ib_q := \ia_1^{\lceil t_1q\rceil} \cdots \ia_n^{\lceil t_nq\rceil}$, we recover the \emph{$\ia_1^{t_1} \cdots \ia_n^{t_n}$-tight closure} of Hara and Yoshida \cite[Remark 6.2(2)]{HaYo-atc}.
\item Given a collection $\{\ia_n\}_{n=1}^\infty$ of ideals such that $\ia_n \cap R^\circ \neq \emptyset$ for all $n$, and such that $\ia_n \ia_m \subseteq \ia_{n+m}$ (i.e. a \emph{graded family of ideals} in the terminology of \cite{EiLaSm-symbolic}), if we set $\ib_q = \ia_q$ for each power $q$ of $p$, then the $\ib_\bullet$-tight closure coincides with the \emph{$||\ia_\bullet||$-tight closure} of Hara \cite[Definition 2.7]{Ha-multanalog}
\end{itemize}

\begin{lemma}\label{lem:pfpreclosure}
    Let $R$ be a Noetherian $\F_p$-algebra and $\ib_\bullet$ a $p$-system of ideals.  Then $\ib_\bullet$-tight closure is a preclosure operation on $R$-submodules.
\end{lemma}

\begin{proof}
Let $K \subseteq L \subseteq M$ be $R$-modules.  It is clear from the definition that $L \subseteq L^{*\ib_\bullet}_M \subseteq M$ and that $K^{*\ib_\bullet}_M \subseteq L^{*\ib_\bullet}_M$. We need only show that $L^{*\ib_\bullet}_M$ is a submodule of $M$.

Accordingly, let $y,z \in L^{*\ib_\bullet}_M$, and $r\in R$.  Then there exist powers $q_0, q_1$ of $p$ and $c,d \in R^\circ$ such that whenever $q \geq q_0$ (resp. $q \geq q_1$), we have $c \ib_q y^q_M \subseteq L^{[q]}_M$ (resp. $d \ib_q z^q_M \subseteq L^{[q]}_M$).  Hence for all $q \geq \max\{q_0, q_1\}$, we have $cd \ib_q (y+rz)^q_M \subseteq d \cdot (c \ib_q y^q_M) + cr^q \cdot (d \ib_q z^q_M) \subseteq L^{[q]}_M$.  Thus $y+rz \in L^{*\ib_\bullet}_M$.
\end{proof}

To extend this definition to sheaves of submodules, we need the following result.

\begin{prop}\label{pr:pfamaffine}
Let $R$ be a Noetherian $\F_p$-algebra, and let $\ib_\bullet = \{\ib_{p^e}\}_{e=1}^\infty$ be a collection of ideals indexed by the powers of $p$. For any affine open subset $U$ of $\Spec R$, let $R(U) := \cO_{\Spec R}(U)$ be the corresponding ring, with $R$-algebra structure induced from the open immersion $U \into \Spec R$, and let $\ib(U)_\bullet = \{\ib(U)_{p^e}\}_{e=1}^\infty$ be the family of $R(U)$-ideals given by $\ib(U)_q := \ib_q R(U)$.  The following are equivalent: \begin{enumerate}[(a)]
\item\label{it:pfam} $\ib_\bullet$ is a $p$-system of ideals of $R$.
\item\label{it:pfamcover} There is an affine open cover $\{U_\alpha\}_{\alpha \in \Lambda}$ of $\Spec R$ such that $\ib(U_\alpha)_\bullet$ is a $p$-system of $R(U_\alpha)$-ideals for each $\alpha \in \Lambda$.
\item\label{it:pfamopen} For any affine open subset $U$ of $\Spec R$, $\ib(U)_\bullet$ is a $p$-system of ideals of $R(U)$.
\end{enumerate}
\end{prop}

\begin{proof}
\ref{it:pfam} $\implies$ \ref{it:pfamopen}: For any affine open subset $U$ of $\Spec R$,  \[\ib(U)_q \ib(U)_{q'}^{[q]} = 
(\ib_q R(U)) (\ib_{q'} R(U))^{[q]} =
\ib_q \ib_{q'}^{[q]} R(U)  \subseteq \ib_{qq'} R(U) = \ib(U)_{qq'}.\]

\ref{it:pfamopen} $\implies$ \ref{it:pfamcover}: Choose the cover consisting of all affine open sets.

\ref{it:pfamcover} $\implies$ \ref{it:pfam}: Refine $\{U_\alpha\}$ to an open cover of the form $\{D(f_\beta)\}_{\beta \in \Sigma}$.  Then whenever we have $D(f_\beta) \subseteq U_\alpha$, it follows that for any $\beta \in \Sigma$ and any powers $q,q'$ of $p$ that \begin{align*}
\ib(D(f_\beta))_q \ib(D(f_\beta))_{q'}^{[q]} &=(\ib_q R_{f_\beta})(\ib_{q'} R_{f_\beta})^{[q]} = (\ib_q R(U_\alpha))( \ib_{q'}^{[q]} R(U_\alpha))R_{f_\beta} \\
&= \ib(U_\alpha)_q \ib(U_\alpha)_{q'}^{[q]} R_{f_\beta} \subseteq \ib(U_\alpha)_{qq'} R_{f_\beta}\\
&= (\ib_{qq'} R(U_\alpha)) R_{f_\beta} = \ib_{qq'} R_{f_\beta} = \ib(D(f_\beta))_{qq'}.
\end{align*}
Hence, $\ib(D(f_\beta))_\bullet$ is a $p$-system of ideals in $R_{f_\beta}$ for each $\beta \in \Sigma$.

Also, since the $f_\beta$ generate $R$, there exist $\beta_1, \ldots, \beta_t \in \Sigma$ such that $R= (f_{\beta_1}, \ldots, f_{\beta_t})$.  Now let $q,q'$ be powers of $p$ and let $x\in \ib_q$ and $y \in \ib_{q'}$, so that for each $1 \leq j\leq t$, $\frac x1 \in \ib(D(f_{\beta_j}))_q$ and $\frac y1 \in \ib(D(f_{\beta_j}))_{q'}$.  Then we have $\frac{xy^q}1 =(x/1) (y/1)^q \in \ib(D(f_{\beta_j}))_{qq'}=\ib_{qq'} R_{f_{\beta_j}}$.  Thus there is some $m_j \in \N$ with $f_{\beta_j}^{m_j} xy^q \in \ib_{qq'}$.  But since the $f_{\beta_j}^{m_j}$ generate $R$, there exist $a_1, \ldots, a_t \in R$ with $\sum_{j=1}^t a_j f_{\beta_j}^{m_j} = 1$.  Hence, $xy^q = \sum_{j=1}^t a_j f_{\beta_j}^{m_j} xy^q \in \ib_{qq'}.$
\end{proof}


\begin{defpr}
Let $X$ be a locally Noetherian $\F_p$-scheme, and let $\cb_\bullet := \{\cb_{p^e}\}_{e=1}^\infty$ be a sequence of quasi-coherent ideal sheaves over $X$.  For any affine open subset $U$ of $X$, set $\cb(U)_\bullet := \{\cb_{p^e}(U)\}_{e=1}^\infty$. The following are equivalent: \begin{enumerate}[(a)]
    \item There is an affine open cover $\{U_\alpha\}$ of $X$ such that $\cb(U_\alpha)_\bullet$ is a $p$-system of $\cO_X(U_\alpha)$-ideals for each $\alpha \in \Lambda$.
    \item For any affine open subset $U$ of $X$, $\cb(U)_\bullet$ is a $p$-system of $\cO_X(U)$-ideals.
\end{enumerate}
Under these conditions we call $\cb_\bullet$ a \emph{$p$-system of ideal sheaves over $X$}.
\end{defpr}

\begin{proof}
We need only prove (a) $\implies$ (b).  For each $\alpha \in \Lambda$, let $\{V_\beta\}_{\beta \in \Sigma_\alpha}$ be an affine open cover of $U \cap U_\alpha$.  By Proposition~\ref{pr:pfamaffine}(\ref{it:pfam} $\Rightarrow$ \ref{it:pfamopen}), $\cb(V_\beta)_\bullet$ is a $p$-system of $\cO_X(V_\beta)$-ideals for all $\beta \in \Sigma_\alpha$ for all $\alpha \in \Lambda$.  But then by Proposition~\ref{pr:pfamaffine}(\ref{it:pfamcover} $\Rightarrow$ \ref{it:pfam}), $\cb(U)_\bullet$ is a $p$-system of $\cO_X(U)$-ideals.
\end{proof}

\begin{defn}
Let $X$ be a locally Noetherian $\F_p$-scheme. 
 Let $\cb_\bullet = \{\cb_{p^e}\}_{e=1}^\infty$ be a $p$-system of ideal sheaves over $X$.  For any affine open subset $U \subseteq X$ and any $R$-submodule inclusion $L \subseteq M$ (where $R = \cO_X(U))$, set $\cb(U)_\bullet := \{\cb_{p^e}(U)\}_{e=1}^\infty$ and $L^{*\cb_\bullet}_M := L^{*\cb(U)_\bullet}_M$.
%
%
\end{defn}

\begin{thm}\label{thm:pfglue}
Let $X$ be a locally Noetherian $\F_p$-scheme.
Let $\cb_\bullet$ be a $p$-system of ideal sheaves over $X$.  Then $*\cb_\bullet$ is a glueable preclosure operation on submodules over $X$.
\end{thm}

We know from Lemma~\ref{lem:pfpreclosure} that whenever $\ib_\bullet$ is a $p$-system of ideals in a Noetherian ring $R$ of characteristic $p$, $*\ib_\bullet$ is a preclosure operation.  It remains to show open persistence and glueability. We begin with a lemma that is presumably (see \cite[parenthetical comment within the statement of Theorem 6.22]{HHbase}) well-known.

\begin{lemma}\label{lem:percirc}
Let $\phi: R \ra S$ be a homomorphism of commutative rings that satisfies going-down.  Then $\phi(R^\circ) \subseteq S^\circ$. In particular, this holds if $\phi$ is flat.
\end{lemma}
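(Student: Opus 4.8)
The plan is to unwind the definitions of $R^\circ$ and $S^\circ$ and show that the going-down property forces the image of a non-minimal-prime-avoiding element to itself avoid all minimal primes. Recall that by definition $R^\circ = R \setminus \bigcup_{\p \in \operatorname{Min}(R)} \p$ is the set of elements lying in no minimal prime of $R$, and likewise for $S^\circ$. So let $c \in R^\circ$; I want to show $\phi(c) \in S^\circ$, i.e. that $\phi(c)$ lies in no minimal prime $\q$ of $S$. Suppose for contradiction that $\phi(c) \in \q$ for some minimal prime $\q$ of $S$.

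The key step is the following: let $\p = \phi^{-1}(\q)$, a prime of $R$. I claim $\p$ is a minimal prime of $R$. Choose any minimal prime $\p_0$ of $R$ with $\p_0 \subseteq \p$. Applying going-down to the inclusion $\p_0 \subseteq \p = \phi^{-1}(\q)$ — that is, to the prime $\q$ of $S$ lying over $\p$ and the prime $\p_0 \subseteq \p$ below it — yields a prime $\q_0$ of $S$ with $\q_0 \subseteq \q$ and $\phi^{-1}(\q_0) = \p_0$. But $\q$ is a minimal prime of $S$, so $\q_0 = \q$, and hence $\p_0 = \phi^{-1}(\q_0) = \phi^{-1}(\q) = \p$. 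Therefore $\p$ is minimal in $R$. Since $\phi(c) \in \q$, we have $c \in \phi^{-1}(\q) = \p$, contradicting $c \in R^\circ$. This establishes $\phi(R^\circ) \subseteq S^\circ$.

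For the final sentence, I would invoke the standard fact that a flat ring homomorphism satisfies going-down (e.g., by \cite{Mats}), so the "in particular" clause is immediate from the main assertion.

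I do not expect any real obstacle here; the only point requiring a little care is making sure going-down is applied in the correct direction — it produces primes of $S$ lying \emph{below} a given one and \emph{over} a prescribed smaller prime of $R$ — and that the minimality of $\q$ is what collapses the chain. One should also note the harmless edge case that if $S$ is the zero ring there are no minimal primes $\q$ and the statement is vacuous, and that $R^\circ$ could in principle fail to meet $\p$-avoidance only if some element genuinely avoids all minimal primes, which is exactly the hypothesis $c \in R^\circ$.
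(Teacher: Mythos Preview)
Your proof is correct and follows essentially the same approach as the paper: both arguments use going-down to relate primes of $S$ containing $\phi(c)$ to primes of $R$ containing $c$, with the only cosmetic difference being that you phrase it as a contradiction (assume $\q$ is a minimal prime of $S$ containing $\phi(c)$ and deduce that $\phi^{-1}(\q)$ is minimal in $R$), whereas the paper shows directly that any prime $Q$ of $S$ containing $\phi(c)$ has a strictly smaller prime below it. The citation of \cite{Mats} for the flat case matches the paper's reference as well.
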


\begin{proof}
Let $c\in R^\circ$.  Let $Q$ be a prime ideal of $S$ with $\phi(c) \in Q$.  Then $c \in \q := \phi^{-1}(Q)$, so by assumption, $\q$ cannot be a minimal prime of $R$.  Let $\p$ be a prime ideal of $R$ properly contained in $\q$.  Then since $\phi$ satisfies the going-down property, there is some prime ideal $P$ of $S$ with $P \subseteq Q$ and $\phi^{-1}(P) = \p$.  Since $\p \neq \q$, it follows that $P \neq Q$.  Thus, $Q$ is not a minimal prime of $S$.  Since $Q \in \Spec S$ was arbitrary with $\phi(c) \in Q$, it follows that $\phi(c) \in S^\circ$.
The final statement follows from \cite[Theorem 9.5]{Mats}.
\end{proof}

Next, we prove open persistence of the operation.

\begin{lemma}\label{lem:perspf}
Let $X$ be a locally Noetherian $\F_p$-scheme. 
 Let $\cb_\bullet$ be a $p$-system of ideal sheaves over $X$.  Then $*\cb_\bullet$ is openly persistent on submodules over $X$.
%
\end{lemma}

\begin{proof}
Let $V \subseteq U$ be affine open subsets of $X$.  Write $R = \cO_X(U)$, $S := \cO_X(V)$, and $\phi: R \ra S$ the corresponding ring map.  For each power $q=p^e$ of $p$, write $\ib_q = \cb_q(U)$. Let $L \subseteq M$ be $R$-modules and $x \in L^{*\cb_\bullet}_M = L^{*\ib_\bullet}_M$.  Then there is some $c\in R^\circ$ and some power $q_0$ of $p$ such that for all $q\geq q_0$, $c \ib_q x^q_M \subseteq L^{[q]}_M$. Applying $\phi$ and $(-) \otimes_R S$, it follows that $\phi(c) (\ib_q S) (x \otimes 1)^{[q]}_{M \otimes S} \subseteq (LS)^{[q]}_{M \otimes_R S}$ for all $q\geq q_0$.  Since $\phi(R^\circ) \subseteq S^\circ$ (by virtue of the flatness of the open immersion $V \subseteq U$ and Lemma~\ref{lem:percirc}; see also Remark~\ref{rem:flatly}), we have $\phi(c) \in S^\circ$.  Moreover, the quasi-coherence of the ideal sheaves $\cb_q$ implies that $\cb_q(V) = \ib_qS$ for each power $q$ of $p$.  Thus, letting $\ib_\bullet S := \{\ib_{p^e} S\}_{e=1}^\infty$, we have $x \otimes 1 \in (LS)^{*\ib_\bullet S}_{M \otimes S} = (LS)^{*\cb_\bullet}_{M \otimes_RS}$.
\end{proof}

\begin{proof}[Proof of Theorem~\ref{thm:pfglue}]
We may assume $X$ is affine. Write $R = \cO_X(X)$.  For each power $q$ of $p$, $\ib_q := \cb_q(X)$.
Let $(f_\alpha)_{\alpha \in A}$ be elements of $R$, with $I$ the ideal they generate, and let $g\in \sqrt I$.  Let $L \subseteq M$ be $R$-modules and $z\in M$ such that $z/1 \in (L_{f_\alpha})^{*\ib_\bullet R_{f_\alpha}}_{M_{f_\alpha}}$ for each $\alpha \in A$, where $\ib_\bullet S := \{\ib_{p^e}S\}_{e=1}^\infty$ for any $R$-algebra $S$.  Since $g\in \sqrt{I}$, there exist $\alpha_1, \ldots, \alpha_k \in A$ such that $g \in \sqrt{(f_{\alpha_1}, \ldots, f_{\alpha_k})}$.  For each $1\leq i \leq k$, there is some power $q_i$ of $p$ and some $c_i \in R^\circ$ such that $(c_i \ib_q z^q_M)_{f_{\alpha_i}} \subseteq (L_{f_{\alpha_i}})^{[q]}_{M_{f_{\alpha_i}}} = (L^{[q]}_M)_{f_{\alpha_i}}$ for all $q \geq q_i$.  Let $q_0 := \max\{q_1, \ldots, q_k\}$ and $c = \prod_{i=1}^k c_i$.  Then there exist positive integers $N_{i,q}$ such fthat for all $i$ and $q\geq q_0$, $f_{\alpha_i}^{N_{i,q}} c \ib_q z^q_M \subseteq L^{[q]}_M$.

Fix some $q\geq q_0$.  Since $g \in \sqrt{(f_{\alpha_i}^{N_{i,q}} \mid 1\leq i \leq k)}$, there exist $r_1, \ldots, r_k \in R$ and $u\in \N$ such that $g^u = \sum_{i=1}^k r_i f_{\alpha_i}^{N_{i,q}}$.  Hence $g^u  c \ib_q z^q_M = \sum_{i=1}^k r_i \cdot \left( f_{\alpha_i}^{N_{i,q}}  c \ib_q z^q_M\right)\subseteq L^{[q]}_M$.  Thus, $(c/1) (\ib_qR_g) (z/1)^q_{M_g} = \left( c \ib_q z^q_M\right)_g \subseteq (L^{[q]}_M)_g = (L_g)^{[q]}_{M_g}$.  As this holds for all $q\geq q_0$, we have $z/1 \in (L_g)^{*\ib_\bullet R_g}_{M_g}$.  But by quasi-coherence of the ideal sheaves $\cb_q$, we have $\cb_q(D(f_\alpha)) = \ib_q R_{f_\alpha}$ for all $\alpha$, and $\cb_q(D(g)) = \ib_q R_g$.  The result follows.
\end{proof}



It is instructive to compare the following corollary with \cite[Theorem 8.14]{HHmain}, where under stable test element conditions, a similar thing is proved for localizations at maximal ideals (i.e. at the stalk level).  In contrast, the following corollary has no need even of weak test elements.

\begin{cor}\label{cor:genunittc}
Let $R$ be a Noetherian ring of positive prime characteristic.  Let $(f_\alpha)_{\alpha \in A}$ be elements of $R$ that generate the unit ideal. Let $M$ be an $R$-module, $L$ a submodule of $M$, and $z\in M$. Then $z \in L^*_M$ if and only if for each $\alpha$, we have $z/1 \in (L_{f_\alpha})^*_{M_{f_\alpha}}$.
\end{cor}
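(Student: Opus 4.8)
The plan is to obtain this as an immediate specialization of the general theory developed above, since ordinary tight closure is simply the $n=0$ case of $\ia_1^{t_1}\cdots\ia_n^{t_n}$-tight closure (as noted in the remark following that definition: with no ideals $\ia_i$ present, the condition $c\, z^q_M \subseteq L^{[q]}_M$ for $q \gg 0$ is exactly the usual definition of $z \in L^*_M$). Thus the substantive content --- open persistence and glueability of $*$ --- is already subsumed by Theorem~\ref{thm:atcglue}, and nothing new needs to be proved from scratch.

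Concretely, I would set $X = \Spec R$. Since $R$ is Noetherian of positive prime characteristic, $R$ is a Noetherian $\F_p$-algebra, so $X$ is a locally Noetherian $\F_p$-scheme. Applying Theorem~\ref{thm:atcglue} with $n = 0$ --- so that the list of quasi-coherent spread-out ideal sheaves is empty and there are no hypotheses to verify --- yields that $*$ is a glueable preclosure operation on submodules over $X$. By the definition of glueability over a scheme, this in particular means $*$ is glueable over the ring $R = \cO_X(X)$ itself (and is openly persistent over $\Spec R$ by Lemma~\ref{lem:persatc} with $n=0$, which is the familiar persistence of tight closure under localization). Equivalently, one could simply run the argument in the proof of Theorem~\ref{thm:atcglue} directly over $R$ in the case $n=0$, where it collapses to the clean statement that a relation $g^u z \in L^*_M$ can be manufactured from relations $f_{\alpha_i}^{N_i} z \in L^*_M$ using $g^u \in (f_{\alpha_1}^{N_1}, \ldots, f_{\alpha_k}^{N_k})$.

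With $*$ known to be a glueable preclosure operation on submodules over $R$, the statement follows verbatim from Corollary~\ref{cor:genunit}: taking $\cl = *$ there, and using the hypothesis that the $f_\alpha$ generate the unit ideal, we conclude $z \in L^*_M$ if and only if $z/1 \in (L_{f_\alpha})^*_{M_{f_\alpha}}$ for every $\alpha \in A$.

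There is essentially no obstacle remaining at this stage; all of the real work --- open persistence and the power-of-generator / radical-membership bookkeeping that drives glueability --- has already been carried out in the proofs of Lemma~\ref{lem:persatc} and Theorem~\ref{thm:atcglue}. The only point requiring a moment's care is the translation between ``glueable over the scheme $\Spec R$'' and ``glueable over the ring $R$'', which is immediate from the definition since $\Spec R$ is an affine open subset of itself.
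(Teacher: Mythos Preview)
Your proposal is correct and matches the paper's intended argument exactly: the paper gives no explicit proof of this corollary, since it is immediate from Theorem~\ref{thm:atcglue} (with $n=0$, so that $*$ is glueable) together with Corollary~\ref{cor:genunit}. Your write-up simply makes these two steps explicit.
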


\begin{mth}\label{mth} All the results from the previous sections of the paper apply to tight closure and its variants.
\end{mth}

\begin{rem}
In particular (using ordinary tight closure for simplicity), we define the tight closure of a subsheaf $\cL$ of a quasi-coherent sheaf $\cM$ by saying that a section $s \in \cM(U)$ is in $\cL^*_\cM(U)$ if $U$ has an affine open cover $\{U_\alpha\}_{\alpha \in A}$ such that $s\vert_{U_\alpha} \in \cL(U_\alpha)^*_{\cM(U_\alpha)}$ as $\cO_X(U_\alpha)$-modules for all $\alpha \in A$.  Then we see above that this gives a sheaf of submodules of $\cM$, and defines a closure operation on sheaves of submodules of $\cM$ (or in the case of most $\ib_\bullet$-tight closures, only a preclosure operation). In the case where $\cL$ is a \emph{coherent} sheaf of submodules, we have that $s \in \cL^*_\cM(U)$ if and only if for \emph{every} affine open $V$ in $U$, we have $s\vert_V \in \cL(V)^*_{\cM(V)}$ -- and $\cL^*_\cM$ is the unique sheaf that admits this property.
\end{rem}

Consider the question: Does tight closure commute with localization at single elements?  This seems to be open, as it was when Brenner and Monsky gave the counterexample to tight closure commuting with \emph{arbitrary} localization \cite[p. 572]{BM-unloc}. We now reformulate the question as follows:

\begin{thm}
Let $R$ be a Noetherian ring containing $\F_p$, and let $I$ be an ideal of $R$.  The following are equivalent \begin{enumerate}[(a)]
\item For any $f\in R$, $(I_f)^* = (I^*)_f$.
\item The sheaf $(\tilde I)^*$ of $\cO_{\Spec R}$-ideals is coherent.
\end{enumerate}
Thus, tight closure of ideals in $R$ commutes with localization at single elements if and only if the tight closure of any coherent sheaf of $\cO_{\Spec R}$-ideals is coherent.

Similarly, if $M$ is an $R$-module and $L$ a submodule, the following are equivalent: \begin{enumerate}[(a)]
\item For any $f\in R$, $(L_f)^*_{M_f} = (L^*_M)_f$.
\item The sheaf $\tilde L^*_{\tilde M}$ of $\cO_X$-submodules of $\tilde M$ is quasi-coherent.
\end{enumerate}
Thus, tight closure of submodules of $R$-modules (resp. finite $R$-modules) commutes with localization at single elements if and only if the tight closure of any quasi-coherent sheaf of submodules of a quasi-coherent (resp. coherent) sheaf of $\cO_{\Spec R}$-modules is always quasi-coherent.
\end{thm}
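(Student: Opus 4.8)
The plan is to obtain the theorem as a direct specialization of Theorem~\ref{thm:glueqcoh} to the case $\cl = *$ (and to its $\ia^t$-variants), which is legitimate because Theorem~\ref{thm:atcglue} — equivalently the Metatheorem~\ref{mth} — guarantees that tight closure is an openly persistent, glueable preclosure operation on submodules, and likewise on ideals, over $\Spec R$. The only additional ingredient needed is the standard fact that over a locally Noetherian scheme a quasi-coherent sheaf whose module of sections over each affine open is finitely generated is coherent, and conversely that coherent sheaves are quasi-coherent.

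For the module statement I would simply invoke Theorem~\ref{thm:glueqcoh} with $R$ Noetherian containing $\F_p$, $M$ an $R$-module, $L \subseteq M$ a submodule, and $\cl = *$: it yields at once the equivalence of (a) $(L_f)^*_{M_f} = (L^*_M)_f$ for all $f \in R$ with (b) quasi-coherence of $\tilde L^*_{\tilde M}$ as constructed in Theorem/Definition~\ref{thmdef:tcmod}. To pass to the global reformulation, I would note that on $X = \Spec R$ every quasi-coherent $\cO_X$-module is $\tilde M$ for an $R$-module $M$ (with $M$ finite exactly when the sheaf is coherent, since $R$ is Noetherian), and that every quasi-coherent subsheaf $\cL \subseteq \tilde M$ equals $\tilde L$ for $L = \cL(X) \subseteq M$; moreover the sheaf $\tilde L^*_{\tilde M}$ of Theorem/Definition~\ref{thmdef:tcmod} is precisely the one appearing in Theorem~\ref{thm:glueqcoh}. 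Hence "tight closure commutes with localization at single elements on (finite) $R$-modules" — that is, (a) holds for every such pair $L \subseteq M$ — is equivalent to "(b) holds for every such pair", which is exactly the asserted quasi-coherence condition.

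For the ideal statement the argument is the same with $\cl = *$ acting on ideals (glueable over $\Spec R$ by Theorem~\ref{thm:atcglue}), except that "quasi-coherent" must be upgraded to "coherent". Here I would use that $I^*$ is an ideal of the Noetherian ring $R$ and hence finitely generated, together with Theorem~\ref{thm:glueable} (applicable since $*$ is glueable), which gives $(\tilde I)^*(\Spec R) = I^*$. Consequently $(\tilde I)^*$ is quasi-coherent if and only if $(\tilde I)^* = \widetilde{I^*}$, and the latter sheaf is coherent because $I^*$ is finitely generated over the Noetherian ring $R$; conversely any coherent sheaf is quasi-coherent. Thus condition (b) of Theorem~\ref{thm:glueqcoh} for ideals is equivalent to coherence of $(\tilde I)^*$, giving the ideal version; quantifying over all ideals $I$, and using that every coherent ideal sheaf on $\Spec R$ is $\tilde I$ for a finitely generated ideal $I$, yields the global ideal reformulation. (One direction of each "commutes" statement, namely $(I^*)_f \subseteq (I_f)^*$, is in any case automatic from open persistence, but this is not needed for the plan.)

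I do not expect a genuine obstacle here: the mathematical content has all been isolated in the previous sections — open persistence and glueability of tight closure and its $\ia^t$-variants, and the quasi-coherence criterion of Theorem~\ref{thm:glueqcoh}. The only points warranting a word of care are the coherent-versus-quasi-coherent bookkeeping in the ideal case, which is handled entirely by Noetherianity of $R$, and the compatibility of the module/ideal-to-sheaf correspondence on an affine scheme with the closure constructions of Sections~1--2; both are routine.
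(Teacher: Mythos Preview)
Your proposal is correct and follows essentially the same approach as the paper, which proves the theorem in one line by invoking Theorem~\ref{thm:atcglue} (with $n=0$) and Theorem~\ref{thm:glueqcoh}. Your additional care in reconciling ``quasi-coherent'' from Theorem~\ref{thm:glueqcoh} with ``coherent'' in the ideal statement via Noetherianity of $R$ is a detail the paper leaves implicit, but otherwise the argument is identical.
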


\begin{proof}
This follows from Theorem~\ref{thm:pfglue} (with $n=0$) and Theorem~\ref{thm:glueqcoh}.
\end{proof}

We introduce a new (non-local!) F-singularity class in the following theorem.

\begin{thm}\label{thm:semiF}
Let $R$ be a Noetherian ring of positive prime characteristic and $X=\Spec R$.  The following are equivalent: \begin{enumerate}[(a)]
    \item For any ideal $I$ of $R$, we have $\cI^*=\cI$, where $\cI = \tilde I$.
    \item For any ideal sheaf $\cI$ on $X$, we have $\cI^*=\cI$.
    \item For any affine open subset $U$ of $X$, every ideal in $\cO_X(U)$ is tightly closed.
    \item For any $f\in R$ and any ideal $I$ of $R$, we have $I_f = (I_f)^*$ (i.e. $R_f$ is weakly F-regular).
    \item For any $R$-module inclusion $L \subseteq M$, with $M$ finitely generated, we have $\cL^*_\cM = \cL$, where $\cL = \tilde L$ and $\cM = \tilde M$.
    \item For any coherent sheaf $\cM$ of $\cO_X$-modules and any sheaf $\cL$ of submodules, we have $\cL^*_\cM = \cL$.
    \item For any affine open subset $U$ of $X$ and any finite $\cO_X(U)$-module $M$, $0$ is tightly closed in $M$.
    \item For any $f\in R$ and any inclusion of finite $R$-modules $L \subseteq M$, we have $(L_f)^*_{M_f} = L_f$ as $R_f$-modules.
\end{enumerate}
\end{thm}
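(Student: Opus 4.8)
The plan is to split the eight conditions into an ``ideal group'' (a)--(d) and a ``module group'' (e)--(h), to establish the equivalences inside each group using the machinery of Sections~2--3, to observe that the module group trivially implies the ideal group, and then to prove the one substantive implication, that the ideal group implies the module group. Throughout, tight closure is the $n=0$ instance of $*\ca_1^{t_1}\cdots\ca_n^{t_n}$, so by Theorem~\ref{thm:atcglue} it is a glueable preclosure operation on submodules over $X=\Spec R$, hence also on ideals; this is what licenses applying the results of Sections~2--3.

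Inside the ideal group, the equivalence (a)$\iff$(b)$\iff$(c) is the ideal-theoretic version of Proposition~\ref{pr:allsubsclosed}, together with the remark that the quasi-coherent ideal sheaves on $\Spec R$ are exactly the $\tilde I$, so that our (a) is the condition ``every quasi-coherent ideal sheaf is closed'' that appears there. For (a)$\iff$(d) I would apply Proposition~\ref{pr:Lclosed} with $\cM=\cO_X$ and $\cL=\tilde I$: its equivalence between $\cL=\cL^*_\cM$ and ``$L_f=(L_f)^*_{M_f}$ for all $f$'' says, for a fixed ideal $I$, that $\tilde I^*=\tilde I$ if and only if $I_f=(I_f)^*$ for every $f\in R$; quantifying over all ideals $I$ and using that every ideal of $R_f$ has the form $IR_f$ gives (d).

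Inside the module group the arguments run in parallel. For (e)$\iff$(f) I would apply Proposition~\ref{pr:allsubsclosed} to each coherent sheaf $\cM=\tilde M$ (every coherent sheaf on $\Spec R$ has this form): since every submodule of the finite module $M$ is again finite, ``every quasi-coherent subsheaf of $\cM$ is closed'' for $\cM=\tilde M$ is precisely (e) restricted to submodules of that $M$, while ``every subsheaf of $\cM$ is closed'', over all coherent $\cM$, is (f). For (e)$\iff$(h) I would apply Proposition~\ref{pr:Lclosed} to $\tilde L\subseteq\tilde M$ for all finite $L\subseteq M$ and quantify, using that every inclusion of finite $R_f$-modules is obtained by localizing an inclusion of finite $R$-modules (lift generators, then take preimages for the submodule). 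For (g), the module-theoretic identity $L^*_M/L=(0)^*_{M/L}$---a consequence of right-exactness of the Frobenius functor, which gives $F^e(M)/L^{[q]}_M\cong F^e(M/L)$---shows that over any ring ``$(0)^*$ vanishes on every finite module'' is equivalent to ``every submodule of every finite module is tightly closed''; applying this at $\cO_X(U)$ for each affine open $U$ rewrites (g) as the latter condition for every $\cO_X(U)$, and since $\cM(U)$ runs over all finite $\cO_X(U)$-modules as $\cM$ runs over coherent sheaves on $X$ (extend a given coherent sheaf on an affine open to all of $X$, using that $X$ is Noetherian), Proposition~\ref{pr:allsubsclosed} makes this equivalent to (f), hence to (e). Finally, the module group implies the ideal group because (e) specialized to $M=R$ is (a).

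What remains is that the ideal group implies the module group, and this is where I expect the real work to lie. By the reductions above it suffices to prove (c)$\implies$(g), which unwinds to a purely ring-theoretic assertion: \emph{if $S$ is a Noetherian ring of characteristic $p$ all of whose single-element localizations $S_g$ are weakly F-regular, then $(0)^*_N=0$ for every finitely generated $S$-module $N$} (equivalently, every submodule of every finitely generated $S$-module is tightly closed). The natural line of attack is the Hochster--Huneke one: given $0\ne z\in(0)^*_N$, choose $\p$ minimal over $\ann_S(z)$, so that $z/1\ne 0$ in $N_\p$ while $z/1\in(0)^*_{N_\p}$ by persistence; pass to the completion of $S_\p$ and use Matlis duality to reduce the vanishing of $(0)^*$ on finite modules to the vanishing of the finitistic tight closure of $0$ in the injective hull of the residue field, which is in turn governed by the tight closure of ideals. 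The delicate point---and what I expect to be the main obstacle---is that this argument uses weak F-regularity of the local ring $S_\p$, while the hypothesis supplies it only for the localizations $S_g$ at single elements; deducing the former from the latter is not routine, being closely related to the localization problem under study, so one must instead carry the reduction through using only the rings $S_g$. I would budget most of the proof's effort for this step.
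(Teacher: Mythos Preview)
Your overall decomposition matches the paper's: split into the ideal block (a)--(d) and the module block (e)--(h), derive the equivalences inside each block from Propositions~\ref{pr:Lclosed} and~\ref{pr:allsubsclosed} together with Theorem~\ref{thm:atcglue}, and note that (e)$\Rightarrow$(a) by taking $M=R$.

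Where you diverge is the bridge back. The paper proves (d)$\Rightarrow$(h) in one line by citing \cite[Proposition~8.7]{HHmain}: if every ideal of a Noetherian $\F_p$-algebra $S$ is tightly closed, then every submodule of every finitely generated $S$-module is tightly closed. Since (d) says precisely that each $S=R_f$ is weakly F-regular, applying this result to each $R_f$ yields (h) immediately. Your reduction of (c)$\Rightarrow$(g) to a ring-theoretic statement is correct, but that statement \emph{is} this Hochster--Huneke result applied to $S=\cO_X(U)$; no further work is required.

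The obstacle you anticipate---needing weak F-regularity of $S_\p$ for $\p$ minimal over $\ann(z)$---is an artifact of the route you sketched, not of the problem itself. One standard way to prove the needed implication localizes instead at a \emph{maximal} ideal $\m$ with $z/1\ne 0$ in $N_\m$: weak F-regularity passes to $S_\m$ by \cite[Corollary~4.15]{HHmain} (Proposition~\ref{pr:wFrlocal} here), and $z/1\in(0)^*_{N_\m}$ by the easy direction of persistence along the flat map $S\to S_\m$; the local case then proceeds along the lines you indicated. So there is no entanglement with the single-element localization problem---just cite the result and move on.
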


\begin{proof}
The equivalence of (a)--(d) (resp. (e)--(h)), follows from Proposition~\ref{pr:Lclosed} (resp. Proposition~\ref{pr:allsubsclosed}) combined with Theorem~\ref{thm:pfglue}.  It is clear that (e) $\implies$ (a) from the special case $M=R$.  Finally, it follows from \cite[Proposition 8.7]{HHmain} that (d) $\implies$ (h). 
\end{proof}

\begin{defn}
If a ring $R$ satisfies the equivalent conditions of Theorem~\ref{thm:semiF}, we say $R$ is \emph{semi F-regular}.  If, more generally, $X$ is a locally Noetherian $\F_p$-scheme, we say it is \emph{semi F-regular} if for each affine open subset $U$ of $X$, $\cO_X(U)$ is semi-F-regular.
\end{defn}

\begin{thm}
Let $X$ be a Noetherian $\F_p$-scheme.  Then $X$ is semi F-regular if and only if there is some cover $\{U_\alpha\}_{\alpha \in A}$ by open affines such that $\cO_X(U_\alpha)$ is semi F-regular for all $\alpha \in A$.
\end{thm}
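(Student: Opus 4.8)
The forward implication is immediate: if $X$ is semi F-regular then, by definition, $\cO_X(U)$ is semi F-regular for \emph{every} affine open $U \subseteq X$, so any affine open cover of $X$ already witnesses the right-hand side. All the content is in the converse.

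First I would record a reformulation. For a Noetherian $\F_p$-scheme $Y$, I claim that $Y$ is semi F-regular if and only if $\cO_Y(W)$ is weakly F-regular for every affine open $W \subseteq Y$. This is essentially condition (c) of Theorem~\ref{thm:semiF}, which characterizes semi F-regularity of a Noetherian $\F_p$-algebra $R$ as the statement that every ideal of $\cO_{\Spec R}(U')$ is tightly closed for each affine open $U' \subseteq \Spec R$. Indeed, for a fixed affine open $U \subseteq Y$ one identifies $\Spec \cO_Y(U)$ with $U$, so that the affine opens of $\Spec \cO_Y(U)$ are exactly the affine opens of $Y$ contained in $U$; condition (c) then says that $\cO_Y(U)$ is semi F-regular exactly when every affine open subset of $U$ has weakly F-regular coordinate ring. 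Letting $U$ range over all affine opens of $Y$ (and noting every affine open $W\subseteq Y$ is contained in one, e.g. itself) yields the claim. In particular, condition (c) applied to the ring $\cO_X(U_\alpha)$ tells us that whenever $\cO_X(U_\alpha)$ is semi F-regular, \emph{every} affine open subset of $U_\alpha$ has weakly F-regular ring of sections.

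Now suppose $\{U_\alpha\}_{\alpha \in A}$ is an affine open cover of $X$ with each $\cO_X(U_\alpha)$ semi F-regular; by the reformulation it suffices to show $\cO_X(W)$ is weakly F-regular for an arbitrary affine open $W \subseteq X$. Write $R = \cO_X(W)$ (a Noetherian ring, since $X$ is a Noetherian scheme) and identify $W = \Spec R$. The sets $U_\alpha \cap W$ form an open cover of the affine, hence quasi-compact, space $W$, so I can choose finitely many basic opens $D_R(h_1), \dots, D_R(h_m)$ covering $W$ with each $D_R(h_j) \subseteq U_{\alpha_j} \cap W$ for some $\alpha_j$; then $h_1, \dots, h_m$ generate the unit ideal of $R$. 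Each $D_R(h_j)$ is an affine open subset of $U_{\alpha_j}$ with $\cO_X(D_R(h_j)) = R_{h_j}$, so by condition (c) applied to the semi F-regular ring $\cO_X(U_{\alpha_j})$, the ring $R_{h_j}$ is weakly F-regular. To finish, let $J$ be an ideal of $R$ and $z \in J^*_R$. Since $h_1, \dots, h_m$ generate the unit ideal, Corollary~\ref{cor:genunittc} (glueability of tight closure) gives $z/1 \in (JR_{h_j})^*_{R_{h_j}}$ for every $j$; but $R_{h_j}$ is weakly F-regular, so $(JR_{h_j})^*_{R_{h_j}} = JR_{h_j}$, whence $z/1 \in JR_{h_j}$ for all $j$. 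As the $h_j$ generate the unit ideal of $R$, this forces $z \in J$. Hence every ideal of $R = \cO_X(W)$ is tightly closed, i.e.\ $\cO_X(W)$ is weakly F-regular; since $W$ was arbitrary, the reformulation shows $X$ is semi F-regular.

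The point to be careful about is that this statement is genuinely nontrivial: weak F-regularity itself is \emph{not} known to be an affine-local property (testing it on a cover is close to the heart of the tight closure localization problem), so one cannot naively ``check it on a cover.'' What rescues the argument is that the hypothesis is the strictly stronger assertion that the $\cO_X(U_\alpha)$ are \emph{semi} F-regular, which by condition (c) propagates weak F-regularity to \emph{all} of their affine opens, combined with the glueability of tight closure from Section~\ref{sec:tcsheaves}, which is exactly the tool that converts ``locally tightly closed along a unit-ideal cover'' back into ``tightly closed.'' The remaining verifications are routine: that a basic open of $W$ lying inside some $U_\alpha$ is an affine open of $U_\alpha$ with the expected ring of sections, that $\Spec R$ is quasi-compact so a finite subcover exists, and that membership $z \in J$ can be detected after localizing at a family of elements generating the unit ideal.
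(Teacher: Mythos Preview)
Your argument is correct. The route differs from the paper's in a useful way: instead of extending the ideal $I$ on $V$ to a coherent sheaf $\cI$ on all of $X$ via \cite[Tag~01PE]{stacks-project} and then invoking Theorem~\ref{thm:glueable} to identify $\cI^*(V)$ with $I^*$ and glue over an arbitrary point-indexed family of affine neighborhoods, you stay entirely inside the affine $W=\Spec R$, cover it by basic opens $D_R(h_j)$ lying in the $U_{\alpha_j}$, and reduce to the elementary local--global principle for ideal membership. This avoids the sheaf-extension step altogether. One small remark: the direction of Corollary~\ref{cor:genunittc} you actually use (from $z\in J^*$ to $z/1\in (JR_{h_j})^*$) is just open persistence, not glueability; the glueability content of your proof is hidden in the reformulation of semi F-regularity via condition~(c) of Theorem~\ref{thm:semiF}, which itself rests on Theorem~\ref{thm:glueable}. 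The paper's proof, by contrast, uses the sheaf-level glueability directly at the last step. Both arguments ultimately hinge on the same ingredient---that semi F-regularity of $\cO_X(U_\alpha)$ forces weak F-regularity on every affine open inside $U_\alpha$---but yours packages it more transparently and needs less machinery.
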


\begin{proof}
The ``only if'' direction is clear from the definition.  Conversely, suppose such a cover $\{U_\alpha\}_{\alpha \in A}$ exists.  Let $V$ be an open affine subset of $X$, let $I$ be an ideal of $\cO_X(V)$.  By \cite[Tag 01PE]{stacks-project} there is a coherent sheaf $\cI$ of $\cO_X$-ideals such that $\cI\vert_V = \tilde I$.  Let $s \in I^*$.  We have $I^* = \tilde I(V)^* = \tilde I^*(V) = \cI^*(V)$ by the second part of Theorem~\ref{thm:glueable}.
For any $x\in V$, there is some $\alpha \in A$ with $x \in U_\alpha$.  Let $W$ be an affine open with $x\in W \subseteq V \cap U_\alpha$.  Then $s\vert_W \in \cI(W)^* = \cI(W)$ by Theorem~\ref{thm:semiF}, since $W \subseteq U_\alpha$ is open affine and $\cO_X(U_\alpha)$ is semi F-regular.  Since $x\in V$ was arbitrary, it follows from the gluing axiom that $s \in \cI(V) = I$.  Hence $I$ is tightly closed, so $X$ is semi F-regular.
\end{proof}

\begin{prop}\label{pr:Fregimplications}
Let $R$ be a Noetherian ring containing $\F_p$.  If $R$ is F-regular, it is semi F-regular.  If $R$ is semi F-regular, it is weakly F-regular. 
\end{prop}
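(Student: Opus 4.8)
The plan is to extract both implications directly from the list of equivalent characterizations in Theorem~\ref{thm:semiF}, the relevant one being condition (d): $R$ is semi F-regular if and only if $R_f$ is weakly F-regular for every $f \in R$. Once this dictionary is in place, both statements are immediate.

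For ``semi F-regular $\implies$ weakly F-regular'': I would simply specialize condition (d) of Theorem~\ref{thm:semiF} to $f = 1$. Since $R_1 = R$, that instance of (d) says exactly that every ideal $I$ of $R$ satisfies $I = I^*$, i.e.\ $R$ is weakly F-regular. Nothing further is needed.

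For ``F-regular $\implies$ semi F-regular'': recall that $R$ F-regular means every localization of $R$ is weakly F-regular; in particular, for each $f \in R$ the localization $R_f = R[1/f]$ is weakly F-regular, so condition (d) holds and $R$ is semi F-regular. If one instead starts from the formulation of F-regularity as ``$R_\p$ is weakly F-regular for every prime $\p$'', I would first record the standard reduction that this already forces each $R_f$ to be weakly F-regular: every prime of $R_f$ has the form $\p R_f$ with $\p \not\ni f$, and $(R_f)_{\p R_f} = R_\p$ is weakly F-regular; then for an ideal $J$ of $R_f$ and $x \in J^*$, persistence of tight closure along the flat maps $R_f \to (R_f)_{\p R_f}$ gives $x/1 \in \bigl(J(R_f)_{\p R_f}\bigr)^* = J(R_f)_{\p R_f}$ for all such $\p$, so $x \in J$; hence $R_f$ is weakly F-regular and condition (d) applies again. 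The only ``obstacle'' here is the bookkeeping choice of which definition of F-regularity to adopt: once ``$R_f$ weakly F-regular for all $f$'' is on the table, both directions of the proposition are formal consequences of Theorem~\ref{thm:semiF}(d), and no new argument is required.
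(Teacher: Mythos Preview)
Your proof is correct and follows essentially the same route as the paper: both implications are read off from characterization (d) of Theorem~\ref{thm:semiF}, using the definition of F-regularity for the first and the specialization $f=1$ for the second. The extra paragraph handling the ``$R_\p$ weakly F-regular for all $\p$'' formulation is correct but unnecessary, since F-regularity is standardly defined via all localizations (and in any case the reduction you sketch is just Proposition~\ref{pr:wFrlocal} applied to $R_f$).
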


\begin{proof}
If $R$ is F-regular, then all localizations of $R$ are weakly F-regular, in particular those of the form $R_f$.  Hence $R$ is semi F-regular.

On the other hand, if $R$ is semi F-regular, then $R_f$ is weakly F-regular for all $f \in R$.  In particular, $1\in R$, so $R=R_1$ is weakly F-regular.
\end{proof}

Recall that weak F-regularity is a local property on closed points.  That is: \begin{prop}[{\cite[Corollary 4.15]{HHmain}}]\label{pr:wFrlocal}
Let $R$ be a Noetherian $\F_p$-algebra.  Then $R$ is weakly F-regular if and only if for every maximal ideal $\m$ of $R$, $R_\m$ is weakly F-regular.
\end{prop}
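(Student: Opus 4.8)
The plan is to prove the two implications separately. The reverse implication is a short consequence of persistence, while the forward implication rests on the fact that, although tight closure is not known to commute with localization in general, it \emph{does} commute with localization at $\m$ when applied to ideals primary to $\m$.

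\emph{The ``if'' direction.} Suppose $R_\m$ is weakly F-regular for every maximal ideal $\m$, let $I$ be an ideal of $R$, and let $x \in I^*$. Since $R \ra R_\m$ is flat, Lemma~\ref{lem:percirc} shows that $R^\circ$ maps into $(R_\m)^\circ$, so persistence of tight closure gives $x/1 \in (IR_\m)^* = IR_\m$ for every $\m$. But an element of $R$ lying in $IR_\m$ for all maximal $\m$ must lie in $I$; hence $x \in I$, and every ideal of $R$ is tightly closed.

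\emph{The ``only if'' direction.} Assume $R$ is weakly F-regular and fix a maximal ideal $\m$. Since a weakly F-regular Noetherian ring is a finite product of normal domains \cite{HHmain} and this decomposition is compatible with both hypothesis and conclusion (with $R_\m$ a localization of one factor), I may assume $R$ is a normal domain, so that $R^\circ = R\setminus\{0\}$ and $(R_\m)^\circ = R_\m\setminus\{0\}$. I first reduce to showing that every $\m R_\m$-primary ideal of $R_\m$ is tightly closed: given any ideal $\q$ of $R_\m$ and $z\in\q^*$, one has $z\in(\q+\m^nR_\m)^*$ for all $n$, each $\q+\m^nR_\m$ is $\m R_\m$-primary, and Krull's intersection theorem yields $z\in\bigcap_n(\q+\m^nR_\m)=\q$. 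So let $\q\subseteq R_\m$ be $\m R_\m$-primary; then $\q = IR_\m$ with $I := \q\cap R$ an $\m$-primary ideal of $R$. If $x\in\q^*$, then after multiplying by a unit I may assume $x=y/1$ with $y\in R$, and there is $0\ne c'\in R_\m$ with $c'(y/1)^q\in I^{[q]}R_\m$ for all $q\gg 0$; clearing a nonzero denominator and absorbing the resulting unit gives $0\ne c\in R$ with $cy^q/1\in I^{[q]}R_\m$ for $q\gg 0$. The crucial point is that $I^{[q]}$ is again $\m$-primary, so $R/I^{[q]}$ is Artinian local and therefore $I^{[q]}R_\m\cap R = I^{[q]}$; hence $cy^q\in I^{[q]}$ for $q\gg 0$, with $c\in R^\circ$. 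Weak F-regularity of $R$ then gives $y\in I^*=I$, so $x=y/1\in IR_\m=\q$.

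The main obstacle is conceptual rather than computational: one must recognize that restricting to $\m$-primary ideals is exactly what rescues the argument, since it is precisely for an $\m$-primary ideal $J$ that $JR_\m\cap R=J$ (because $R/J$ is already Artinian local) and Frobenius powers $I^{[q]}$ remain $\m$-primary. The rest is routine bookkeeping, handled by flatness of $R\ra R_\m$ together with Lemma~\ref{lem:percirc}, Krull's intersection theorem, and the structure theory of weakly F-regular rings. Note that this does not collapse weak F-regularity into F-regularity: localizing at a \emph{non}-maximal prime $\p$ would require $R/I^{[q]}$ (for $\p$-primary $I$) to be $\p$-local Artinian, which fails.
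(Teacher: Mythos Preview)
The paper does not supply its own proof of this proposition; it is quoted verbatim as \cite[Corollary 4.15]{HHmain} and used as a black box in the subsequent arguments. So there is no paper proof to compare against.

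That said, your argument is correct and is essentially the standard Hochster--Huneke proof. The ``if'' direction via persistence is immediate, and for the ``only if'' direction you have identified the two genuine ingredients: (i) the reduction to $\m R_\m$-primary ideals using Krull's intersection theorem, and (ii) the observation that for an $\m$-primary ideal $J$ of $R$ one has $JR_\m \cap R = J$ because $R/J$ is already local, which lets you pull the tight closure witness $c y^q \in I^{[q]}$ back to $R$. The preliminary reduction to a normal domain (so that $R^\circ = R \setminus \{0\}$ and clearing the denominator of $c'$ automatically lands you in $R^\circ$) is the clean way to handle the test element bookkeeping. One small stylistic point: the final paragraph, while accurate, reads more like commentary than proof and could be omitted or compressed without loss.
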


It is natural to ask: is semi F-regularity equivalent to weak F-regularity?  This is true for \emph{Jacobson} rings.

\begin{prop}\label{pr:semiFJacob}
Let $R$ be a Noetherian $\F_p$-algebra which is a Jacobson ring.  Then $R$ is semi F-regular if and only if it is weakly F-regular.
\end{prop}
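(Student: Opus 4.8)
The plan is to use the fact that weak F-regularity is a local property on closed points (Proposition~\ref{pr:wFrlocal}) and then exploit the Jacobson hypothesis to relate localizations at maximal ideals to localizations at single elements. One direction is free: semi F-regularity always implies weak F-regularity by Proposition~\ref{pr:Fregimplications}, with no hypotheses on $R$ at all. So the content is the converse: assuming $R$ is weakly F-regular and Jacobson, we must show $R_f$ is weakly F-regular for every $f\in R$, which is exactly condition (d) of Theorem~\ref{thm:semiF}.

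First I would note that since $R$ is Jacobson, so is $R_f$ for any $f$, because localizations of Jacobson rings are Jacobson. By Proposition~\ref{pr:wFrlocal} applied to $R_f$, it suffices to show that $(R_f)_\n$ is weakly F-regular for every maximal ideal $\n$ of $R_f$. Now the maximal ideals of $R_f$ are exactly the ideals of the form $\p R_f$ where $\p$ is a prime of $R$ maximal among those not containing $f$; the key point provided by the Jacobson hypothesis is that such a $\p$ is in fact a maximal ideal of $R$ (this is precisely the property that, in a Jacobson ring, primes maximal in a basic open set $D(f)$ are maximal ideals of the whole ring — equivalently, $D(f)$ with its induced topology is Jacobson and its closed points are closed points of $\Spec R$). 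Hence $\p$ is a maximal ideal $\m$ of $R$ with $f\notin\m$, and $(R_f)_{\p R_f} \cong R_\m$.

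The conclusion then follows immediately: since $R$ is weakly F-regular, Proposition~\ref{pr:wFrlocal} gives that $R_\m$ is weakly F-regular for every maximal ideal $\m$; in particular $(R_f)_\n \cong R_\m$ is weakly F-regular for every maximal ideal $\n$ of $R_f$; so $R_f$ is weakly F-regular by Proposition~\ref{pr:wFrlocal} again. Since $f\in R$ was arbitrary, condition (d) of Theorem~\ref{thm:semiF} holds, so $R$ is semi F-regular.

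I expect the only real point needing care is the Jacobson-ring fact that a prime of $R$ maximal in $D(f)$ is a maximal ideal of $R$; this is standard (it is the statement that $R$ Jacobson implies $R_f$ Jacobson together with the identification of closed points), but it is where the hypothesis is genuinely used. Everything else is formal bookkeeping with the localization isomorphism $(R_f)_{\p R_f}\cong R_\p$ and two applications of the local-on-closed-points criterion.
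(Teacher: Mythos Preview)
Your proof is correct and follows essentially the same argument as the paper: fix $f$, use the Jacobson hypothesis to see that every maximal ideal of $R_f$ is the extension of a maximal ideal of $R$ not containing $f$, and then apply Proposition~\ref{pr:wFrlocal} twice. One small remark: your aside that $R_f$ is Jacobson is never actually used, and its stated justification (``localizations of Jacobson rings are Jacobson'') is false for arbitrary localizations---though the specific claim is true since $R_f$ is of finite type over $R$.
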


\begin{proof}
The ``only if'' direction is part of Proposition~\ref{pr:Fregimplications}.  For the converse, assume $R$ is weakly F-regular, let $f\in R$ be a non-nilpotent element (else $R_f = 0$ is weakly F-regular, vacuously) and let $\n$ be a maximal ideal of $R_f$.  Since $R$ is Jacobson, $\n = \m R_f$ for some maximal ideal $\m$ of $R$ with $f\notin \m$ \cite[Tag 00G6]{stacks-project}.  Thus, $(R_f)_\n = (R_f)_{\m R_f} = R_\m$, which is weakly F-regular since $R$ is by Proposition~\ref{pr:wFrlocal}.  As $\n$ was an arbitrary maximal ideal of $R_f$, it follows (again from Proposition~\ref{pr:wFrlocal}) that $R_f$ is weakly F-regular.  As $f\in R$ was arbitrary, $R$ is semi F-regular.
\end{proof}

On the other hand, if the two conditions were known to be equivalent for \emph{local} rings, then a major conjecture would be solved
:

\begin{prop}\label{pr:wFregsemi}
Suppose that for all Noetherian weakly F-regular local $\F_p$-algebras are semi F-regular.  Then weak F-regularity is equivalent to F-regularity for all Noetherian $\F_p$-algebras.
\end{prop}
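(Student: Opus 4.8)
The plan is to reduce to local rings and then induct on Krull dimension, invoking the hypothesis exactly once, to make dimension drop. Since F-regularity trivially implies weak F-regularity (take the trivial multiplicative set, or cite Proposition~\ref{pr:Fregimplications}), only the converse is at issue. First I would record the standard fact that a Noetherian $\F_p$-algebra $R$ is F-regular if and only if $R_\p$ is weakly F-regular for every prime $\p$: given any multiplicative set $W$, the primes of $W^{-1}R$ are the $W^{-1}\p$ with $(W^{-1}R)_{W^{-1}\p} = R_\p$, so if every $R_\p$ is weakly F-regular then every localization of $R$ has all of its localizations at maximal ideals weakly F-regular, hence is weakly F-regular by Proposition~\ref{pr:wFrlocal}. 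Thus it suffices to prove that $R$ weakly F-regular implies $R_\p$ weakly F-regular for all $\p \in \Spec R$.

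For that, given such an $R$ and a prime $\p$, pick a maximal ideal $\m \supseteq \p$. By Proposition~\ref{pr:wFrlocal}, $R_\m$ is weakly F-regular, and $R_\p = (R_\m)_{\p R_\m}$. So the entire statement follows from the following claim, to be proved by induction on $d$:

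$(\star_d)$: every Noetherian weakly F-regular local $\F_p$-algebra $S$ with $\dim S \le d$ satisfies that $S_\q$ is weakly F-regular for all $\q \in \Spec S$.

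The case $d=0$ is trivial, as the maximal ideal is the only prime and $S_\m = S$. For the inductive step, fix such an $S$ of dimension $\le d$ with maximal ideal $\m$, and a prime $\q \subsetneq \m$ (the case $\q = \m$ being trivial). By hypothesis $S$ is semi F-regular, so for any $f \in \m \setminus \q$ the ring $S_f$ is weakly F-regular (Theorem~\ref{thm:semiF}). Choose a maximal ideal $\n$ of $S_f$ with $\q S_f \subseteq \n$; then $(S_f)_\n$ is weakly F-regular by Proposition~\ref{pr:wFrlocal}, and it equals $S_{\p'}$ where $\p' := \n \cap S$ is a prime of $S$ with $\q \subseteq \p'$ and $f \notin \p'$. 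Since $f \in \m \setminus \p'$ we get $\p' \subsetneq \m$, hence $\height \p' \le \dim S - 1 \le d-1$. Thus $S_{\p'}$ is weakly F-regular, local, of dimension $\le d-1$, so $(\star_{d-1})$ applies to it and yields that $S_\q = (S_{\p'})_{\q S_{\p'}}$ is weakly F-regular. This establishes $(\star_d)$, and with it the proposition.

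The crux --- the only point at which the hypothesis is used --- is the dimension-dropping step: one must replace a weakly F-regular local ring by a weakly F-regular local ring of strictly smaller dimension interpolating the given primes, and while Proposition~\ref{pr:wFrlocal} lets weak F-regularity pass to localizations at maximal ideals, it is precisely the passage to $S_f$ (unknown in general) that the semi F-regularity assumption supplies. Everything else is routine bookkeeping with chains of primes.
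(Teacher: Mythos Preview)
Your proof is correct, and it rests on the same mechanism as the paper's: from a weakly F-regular local ring $S$, the hypothesis gives that each $S_f$ is weakly F-regular, and Proposition~\ref{pr:wFrlocal} then transfers weak F-regularity down to localizations at maximal ideals of $S_f$. The difference is purely organizational. The paper argues by contradiction, choosing $\p$ \emph{maximal} among primes with $R_\p$ not weakly F-regular and then a prime $\q\supsetneq\p$ with $\hgt(\q/\p)=1$; maximality makes $R_\q$ weakly F-regular, and after inverting any $f\in\q\setminus\p$ the prime $\p(R_\q)_f$ is already maximal in $(R_\q)_f$, so a single application of Proposition~\ref{pr:wFrlocal} finishes with no induction. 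Your version replaces this height-one trick with an induction on $\dim S$, at the cost of an extra step (picking a maximal ideal $\n$ of $S_f$ over $\q S_f$ and recursing into $S_{\p'}$). The paper's route is a bit slicker; yours is more systematic and makes the descent entirely explicit.
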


\begin{proof}
Suppose $R$ is a weakly F-regular Noetherian $\F_p$-algebra that is not F-regular.  Then there is some prime ideal $\p$ such that $R_\p$ is not weakly F-regular; choose $\p$ to be maximal in the set of all such prime ideals.  By Proposition~\ref{pr:wFrlocal} $\p$ cannot be a maximal ideal.  Choose a prime ideal $\q \supseteq \p$ such that $\hgt \q/\p =1$.  Then $R_\q$ is weakly F-regular.  Choose $f \in \q \setminus \p$.  Then $\p (R_\q)_f$ is a maximal ideal of the ring $(R_\q)_f$.  Moreover, since we are assuming all weakly F-regular local rings are semi F-regular, we have that $R_\q$ is semi F-regular, which implies that $(R_\q)_f$ is weakly F-regular.  By Proposition~\ref{pr:wFrlocal}, we then have that $((R_\q)_f)_{\p (R_\q)_f}\cong R_\p$ is weakly F-regular, which is a contradiction.
\end{proof}

\section*{Acknowledgment}
Thanks to Holger Brenner, Craig Huneke, Karl Schwede, and Karen Smith for early conversations regarding the idea for this paper.  Thanks also to Evan Houston for Example~\ref{ex:Houston}, and to Wenliang Zhang for the idea behind Proposition~\ref{pr:wFregsemi}.  Finally, I would like to thank the anonymous referee for suggesting that in Section~\ref{sec:tc}, I should generalize from $\ia^t$-type tight closure to tight closure in the more general settings of $p$-families and $F$-graded systems of ideals.

\providecommand{\bysame}{\leavevmode\hbox to3em{\hrulefill}\thinspace}
\providecommand{\MR}{\relax\ifhmode\unskip\space\fi MR }
\providecommand{\MRhref}[2]{%
  \href{http://www.ams.org/mathscinet-getitem?mr=#1}{#2}
}
\providecommand{\href}[2]{#2}

\end{document}